\documentclass[a4paper,11pt]{article}
\usepackage{amsmath,amsthm,amsfonts,amssymb,bm} 
\usepackage{times}                       
\usepackage{CJK,CJKnumb,CJKulem}         
\usepackage[left=2.5cm,right=2.5cm,top=2.5cm,bottom=2.5cm]{geometry}
\usepackage{color}
\usepackage{appendix}
\usepackage{diagbox}
\usepackage{multirow}
\usepackage{booktabs}
\usepackage{threeparttable}             
\usepackage{caption}
\usepackage{chngcntr}
\usepackage{hyperref}
\usepackage{authblk}
\usepackage{mathrsfs}
\usepackage{qcircuit}
\usepackage{cases}

\captionsetup[figure]{name={Fig.},labelsep=period}
\captionsetup[table]{name={TABLE},labelsep=period}
\counterwithin{figure}{section}
\counterwithin{table}{section}

\usepackage{graphicx,psfrag}                    
\usepackage{subfigure}
\numberwithin{equation}{section}
\usepackage[capitalise]{cleveref}

\newtheorem{theorem}{Theorem}[section] 
\newtheorem{lemma}{Lemma}[section]     
\newtheorem{remark}{Remark}[section]
\newtheorem{example}{Example}[section]

\begin{document} 
\date{}
\title{A decoupled and structure-preserving  direct discontinuous Galerkin  method for the Keller--Segel  Model}
\author[a]{Xu Yin}
\author[a]{Xinyi Lan}
\author[a,*]{Yuzhe Qin}
\affil[a]{School of Mathematics and Statistics, Shanxi University, Taiyuan, 030006, China}
\affil[*]{Corresponding author, yzqin@sxu.edu.cn}
\renewcommand*{\Affilfont}{\small\it} 
\renewcommand\Authands{ and } 
\maketitle
\begin{abstract}
In this work, we develop a novel numerical scheme to solve the classical Keller--Segel (KS) model 
which simultaneously preserves its intrinsic mathematical structure and achieves optimal accuracy. 
The model is reformulated into a gradient flow structure using the energy variational method, 
which reveals the inherent energy dissipative dynamics of the system. 
Based on this reformulation, 
we construct a structure-preserving discretization by semi-implicit method in time 
and the direct discontinuous Galerkin (DDG) method in space, 
resulting in a stable and high-order accurate approximation. 
The proposed scheme enjoys several desirable properties:
(i) energy stability, ensuring discrete free energy dissipation;
(ii) exact conservation of mass for the cell density;
(iii) positivity preservation of the numerical cell density, enforced via a carefully designed limiter; and
(iv) optimal convergence rate, with first-order accuracy in time and $(k+1)$-th order accuracy 
in space for polynomials of degree $k$. 
We provide rigorous theoretical analysis that substantiate these properties. 
In addition, extensive numerical experiments, including benchmark problems exhibiting pattern formation 
and near blow-up behavior, are conducted to validate the theoretical results and demonstrate the robustness, 
efficiency, and accuracy of the proposed method. The approach offers a flexible and reliable framework 
for structure-preserving numerical simulation of chemotaxis models and other gradient flow-type systems.
\end{abstract}

\section{Introduction} 

The Keller--Segel (KS) equations, 
firstly proposed by Patlak \cite{Pat1953} and Keller \& Segel \cite{Kel1970,Kel1971}, 
are originally formulated to model chemotaxis-driven cellular aggregation in biological systems. 
This coupled system of parabolic equations describes the interaction between cell density $u$ and chemical attractant concentration $c$, 
exhibiting rich dynamics such as pattern formation and finite-time blow-up phenomenon\cite{che2018,chj1981}.
The solutions to the KS equations are widely acknowledged for their inherent properties of mass conservation, positivity of concentration or density, and energy dissipation.

   A plethora of analytical results regarding the solutions of the classical KS model have been thoroughly investigated by researchers. These results encompass a wide range of aspects, such as existence, uniqueness, boundedness, and blow-up phenomena, as documented in references \cite{gajewski1998,corrias2014}.
For example,   Corrias et al. \cite{corrias2004} analyzed the global existence of weak solutions to the parabolic-elliptic KS equations and demonstrated that when the initial value is large, the solution will undergo blow-up or Dirac aggregation. Subsequently, Nagai et al. \cite{nagai2018} investigated the boundedness of non-negative solutions to this type of equation. Osaki et al. \cite{osaki2001} studied the existence and boundedness of global solutions to the one-dimensional parabolic-parabolic KS equations under the assumption of smooth initial values.  In addition, Winkler has done a great deal of work on the properties of solutions to this type of model \cite{winkler2010,winkler20101,winkler2013}. Generally, it is difficult for us to obtain the exact solutions to the KS equations. Therefore, efficient numerical simulation techniques are of great significance for the study of the KS equations. Ideally, numerical methods should preserve these key structural properties while maintaining accuracy and robustness across different dynamical regimes, 
including smooth aggregation and singular blow-up. 
   
Over the past few decades, significant research efforts have been directed toward developing efficient, positivity-preserving, and energy stable numerical schemes for the KS equations.
Saito et al. employed upwind finite difference schemes in space and semi-implicit Euler schemes in time, ensuring the conservation property in the discrete $L^1$ norm \cite{saito2005}. 
Based on this work, finite difference positivity-preserving and mass-conserving methods were further developed \cite{saito2007}.
Regarding the finite volume method (FVM), Filbet solved the KS model, analyzing the existence, uniqueness, and convergence of the numerical solutions \cite{ fil2006}. 
Chertock et al. designed an upwind-based finite volume method that maintained second-order accuracy while preserving the positivity of the numerical solutions \cite{che2008}. 
Subsequently, Zhou et al. derived error estimates for the finite volume method \cite{zhou2017}, allowing a first-order numerical scheme to maintain mass conservation and positivity without the restriction of the CFL condition. 
As for the finite element method (FEM),  Norikazu et al. constructed a conservative upwind finite element method for the KS equations, which preserved the positivity and mass conservation of the solutions and provided error estimates \cite{saito2007}. 
Sulman et al. proposed an efficient adaptive moving mesh finite element method for the KS equations, proving its positivity-preserving property and validating its effectiveness through numerical experiments \cite{sul2019}.  
Discontinuous Galerkin (DG) methods, renowned for their flexibility in handling complex geometries and achieving high-order accuracy, have shown promise in solving nonlinear parabolic systems. 
More general information about DG methods for elliptic, parabolic, and hyperbolic PDEs can be found in recent books  (see e.g., \cite{hes2007,ri2008}).
However, conventional DG formulations for the KS system typically rely on stabilization techniques or post-processing to enforce structural properties, which can introduce artificial dissipation or complicate convergence analysis. 
These limitations underscore the need for a \textit{direct} and \textit{structure-preserving} DG framework that inherently respects the mathematical and physical constraints of the KS model. 

In this work, we propose a DDG scheme for the KS system, designed to unify structure preservation, high-order accuracy, and rigorous convergence analysis in a broader setting.
The DG methods have been successfully applied to a wide variety of problems ranging from
the solid mechanics to the fluid mechanics \cite{Aizinger2000,cockburn1998,Girault2005,sun2005}.
The DDG method is a special class of the DG methods introduced in \cite{liu2009,liu2010} speciﬁcally for diffusion.
It has been successfully applied to various application problems, including linear and nonlinear Poisson equations \cite{huang2012, yin2014,cao2025} and Fokker--Planck type equations\cite{liu2016, liu2017}. 
Unlike traditional DG methods that rely on local reconstructions for flux terms, 
the DDG approach directly computes numerical fluxes by enforcing continuity conditions on the solution’s derivatives across element interfaces. 
This feature is particularly advantageous for handling the second-order diffusion terms in the KS system, as it avoids the need for auxiliary variables or mixed formulations. 
The DDG method’s inherent flexibility in accommodating irregular meshes and its ability to preserve local conservation laws make it well-suited for resolving sharp gradients and blow-up structures inherent to chemotactic collapse. 
To address the stiffness introduced by diffusion and chemotactic drift terms, we employ a semi-implicit time-stepping scheme. 
The implicit treatment decouples stability constraints from spatial resolution, enabling the use of larger time steps without sacrificing accuracy. 
The nonlinear system arising from the implicit discretization is solved using a Newton-Krylov solver, augmented with preconditioning techniques to mitigate the ill-conditioning caused by strong chemotactic coupling.
	
The remainder of this paper is organized as follows. Section \ref{sec:eqns} introduces the KS model and its mathematical properties. Section \ref{sec:scheme} details the DDG spatial discretization and semi-implicit time-stepping scheme. In Section \ref{sec:property}, the mass conservation and positivity properties of the numerical solution are studied. Optimal error estimates are established
in Section \ref{sec:error}. Numerical experiments in Section \ref{sec:results} validate the method’s accuracy, efficiency, and robustness in capturing blow-up phenomena. Conclusions and future directions are discussed in Section \ref{sec:conclusions}.

Throughout this paper, we adopt standard notations for Sobolev spaces.  For instance, we have $W^{m,p} (D)$ on a sub-domain $D \subset \Omega$, which is equipped with the norm $\|\cdot\|_{m,p,D}$ and the semi-norm $| \cdot |_{m,p,D}$. When $D =\Omega$, we omit the index $D$. In the case where $p = 2$, we define $W^{m,p} (D) = H^m(D)$, $\|\cdot\|_{m,p,D}=\|\cdot\|_{m,D}$,  and $| \cdot |_{m,p,D} = | \cdot |_{m,D}$.  The notation $A \lesssim B$ implies that $A$ can be bounded by $B$ multiplied by a positive constant. 





    
\section{Keller--Segel system and its energy law}\label{sec:eqns}
The KS system is composed of a set of parabolic partial differential equations shown as follows, 
\begin{subequations}\label{eqn:ori_KS}
\begin{numcases}{}
u_t=\nabla \cdot (D \nabla u-\chi \varphi(u) \nabla c),\quad \mbox{in} \quad  [0,T]\times \Omega\label{eqn:ori_u}\\
\beta c_t=\Delta c-\alpha c+u \quad \mbox{in} \quad [0,T]\times \Omega\label{eqn:ori_c}\\
u(0,x)=u_0,\quad c(0,x)=c_0,\quad u_0\in(0,1), \quad c_0>0,\label{eqn:ori_ini}
\end{numcases}
\end{subequations}
where $u$ and $c$ are the cell density function and chemical attractant concentration, respectively. 
The time derivatives of $u$ and $c$ are defined by $u_{t}$ and $c_{t}$. 
Cells can move randomly by diffusion with the diffusion coefficient given by $D$,
and $\chi$ is a coefficient used to represent the strength of chemotaxis. 
$\beta > 0$ is a small parameter describing the time scale. 
For simplicity, the periodic boundary conditions are set to make the system self-closed.
To make the numerical scheme more convenient for analysis, we rewrite the above original KS model \eqref{eqn:ori_KS} into the following equivalent form: 
\begin{subequations}\label{eqn:KS}
\begin{numcases}{}
u_t =\nabla \cdot (\chi \varphi(u) \nabla \frac{\delta E}{\delta u}),\label{eqn:u}\\
\beta c_t=-\frac{\delta E}{\delta c},\label{eqn:c}\\
\frac{\delta E}{\delta u}=Bg(u)-c,\label{eqn:dEdu}\\
\frac{\delta E}{\delta c}=-\Delta c+\alpha c-u,\label{eqn:dEdc}
\end{numcases}
\end{subequations}
where $u$ and $c$ have the same definition as in \eqref{eqn:ori_KS}. 
Here we omit the initial conditions for simplicity. 
Additionally, we have $\beta>0$, $B=\frac{D}{\chi}$, $F(u)=u\ln u+(1-u)\ln (1-u)$, $g(u)=F'(u)$ and $\varphi(u)=\frac{1}{g'(u)}=u(1-u)$.

The KS system \eqref{eqn:KS} performs a gradient flow structure
\begin{equation*}
\frac{\mathrm{d}E}{\mathrm{d}t}=-\int_{\Omega}\left(\chi \varphi\left(\nabla \frac{\delta E}{\delta u}\right)^2+\frac{1}{\beta} \left(\frac{\delta E}{\delta c}\right)^2\right) \mathrm{d}\bm{x} \leq 0,
\end{equation*}
with the following free energy
\begin{equation*}
E[u,c]=\int_{\Omega} B\left( u\ln u+(1-u)\ln(1-u) \right) -uc+\frac{1}{2} (|\nabla c|^2+\alpha c^2) \mathrm{d}\bm{x},
\end{equation*}
under the periodic boundary condition, 
where $\delta$ denotes the variational derivative.
For this PDE system, a positivity-preserving property, that is, $0 < u < 1$, can be
theoretically justified due to the logarithmic terms appearing in $u$.
\begin{lemma}
Suppose that $\Omega \subset \mathbb{R}^{d}(d=1,2,3)$. 
For simplicity, we assume $\Omega= (0, L_x) \times (0, L_y)$ with $L_x = L_y := L > 0$.
Let the function  $u, c: \Omega \rightarrow \mathbb{R}$ are periodic and sufficiently regular. Define the following energy functions
\begin{subequations}
\renewcommand{\theequation}{}
\begin{equation*}
\left\{
\begin{aligned}
E_c[u,c]&=\int_{\Omega}B F(u)+\frac{1}{2} |\nabla c|^2+\frac{1}{2}\alpha c^2+\frac{1}{2}\Bigl(\frac{1}{\gamma}u-\gamma c\Bigr)^2 \,\mathrm{d}\bm{x},\\
E_e[u,c]&=\int_{\Omega}  \frac{1}{2\gamma^2}u^2+ \frac{1}{2}\gamma^2 c^2 \,\mathrm{d}\bm{x},
\end{aligned}
\right.
\end{equation*}
\end{subequations}
where $F(u)=u\ln u+(1-u)\ln (1-u)$, $\gamma$  is a positive number. Then $E_c[u,c]$ and $E_e[u,c]$ are both convex with respect to $u$ and $c$, with $	E[u,c]=	E_c[u,c]-	E_e[u,c]$.
\end{lemma}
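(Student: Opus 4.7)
The plan is to verify the two claims separately: first the algebraic identity $E=E_c-E_e$, and then the convexity of $E_c$ and $E_e$ as functionals of $(u,c)$.

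First I would establish the decomposition $E[u,c]=E_c[u,c]-E_e[u,c]$ by expanding the perfect square appearing in $E_c$. A direct computation gives
\begin{equation*}
\tfrac{1}{2}\Bigl(\tfrac{1}{\gamma}u-\gamma c\Bigr)^{2}=\tfrac{1}{2\gamma^{2}}u^{2}-uc+\tfrac{1}{2}\gamma^{2}c^{2},
\end{equation*}
so that when $E_e$ is subtracted from $E_c$, the two pure quadratic terms cancel precisely with the corresponding contributions inside the square, leaving only the coupling $-uc$ together with $BF(u)$, $\tfrac12|\nabla c|^2$, and $\tfrac12\alpha c^2$. This recovers the expression for $E[u,c]$ given in the energy law preceding the lemma.

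Next I would verify convexity of $E_c$ by writing it as a sum of four contributions and checking convexity of each with respect to the pair $(u,c)$. The term $BF(u)=B\bigl(u\ln u+(1-u)\ln(1-u)\bigr)$ is convex in $u$ on $(0,1)$, since $F''(u)=\tfrac{1}{u(1-u)}>0$; the Dirichlet term $\tfrac12|\nabla c|^2$ and the mass term $\tfrac12\alpha c^2$ are convex in $c$ because they are non-negative quadratic forms; and the coupling term $\tfrac12(\tfrac1\gamma u-\gamma c)^2$ is jointly convex in $(u,c)$, being the square of an affine functional. Since the sum of jointly convex integrands is jointly convex and integration preserves convexity, $E_c$ is convex. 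The convexity of $E_e$ is then immediate, as it is the integral of a positive-definite quadratic form in $(u,c)$ with no cross-term.

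The main obstacle, if any, is ensuring the convexity argument for $BF(u)$ is rigorous on the correct domain: $F$ is only defined and strictly convex on $(0,1)$, so the statement implicitly uses the a priori constraint $u\in(0,1)$ stated in \eqref{eqn:ori_ini} and preserved by the positivity property of the system. I would note this restriction explicitly and remark that it is compatible with the hypothesis that $u,c$ are sufficiently regular periodic functions; the remaining pieces require only elementary properties of quadratic forms, so no further technical difficulty arises.
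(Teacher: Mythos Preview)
Your proposal is correct and covers everything the lemma asserts. In particular, you explicitly verify the algebraic identity $E=E_c-E_e$ by expanding the square, which the paper's proof skips entirely (it only states ``We focus on the convexity analysis'').

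For the convexity part, your route differs from the paper's. The paper treats the integrand $e_c$ as a function of the vector $\bm v=(u,c,c_x,c_y)$, computes all the nonzero second partial derivatives, and checks that the leading principal minors of the Hessian are positive for $0<u<1$; similarly it writes the Hessian of $e_e$ as a diagonal matrix and notes it is positive semi-definite. You instead split $e_c$ into four structurally convex pieces---$BF(u)$ (strictly convex on $(0,1)$ by $F''>0$), two non-negative quadratic forms in $c$ and $\nabla c$, and the square of an affine map in $(u,c)$---and use that sums of jointly convex functions are jointly convex. Your argument is shorter and more conceptual, since it avoids the bookkeeping of principal minors (including the mixed entry $\partial^2_{uc}e_c=-1$, which the paper must track). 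The paper's Hessian computation is more mechanical but has the mild advantage of displaying the strict convexity in $u$ quantitatively through the explicit entry $B/u+B/(1-u)+1/\gamma^2$. Both approaches ultimately rely on the same final step: pointwise convexity of the integrand passes to the functional by integrating the defining inequality.
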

\begin{proof}
We focus on the convexity analysis of $E_c[u,c]$ and $E_e[u,c]$. Denote 
\begin{eqnarray*}
&    e_c({\bm v})\triangleq  e_c(u,c,c_x,c_y)=B F(u)+\frac{1}{2} |\nabla c|^2+\frac{1}{2}\alpha c^2+\frac{1}{2}(\frac{1}{\gamma}u-\gamma c)^2\\
   &e_e(\bm v)\triangleq e_e(u,c,c_x,c_y)=\frac{1}{2\gamma^2}u^2+ \frac{1}{2}\gamma^2 c^2,
\end{eqnarray*}
where
\begin{equation*}
    {\bm v}=(v_1,v_2,v_3,v_4)\triangleq (u,c,c_x,c_y).
\end{equation*}
    Then we have 
    \begin{equation*}
        E_c[u,c]=\int_{\Omega} e_c({\bm v}) \mathrm{d}{\bm x}, \quad 
        E_e[u,c]=\int_{\Omega} e_e({\bm v}) \mathrm{d}{\bm x},
    \end{equation*}
    and the following inequalities are derived,
    \begin{eqnarray*}
        &\partial_{v_1}^2 e_c(v_1,v_2,v_3,v_4)=B\frac
        {1}{u}+B\frac
        {1}{1-u}+\frac
        {1}{\gamma^2},\\
   &  \partial_{v_2}^2 e_c(v_1,v_2,v_3,v_4)=\alpha+\gamma^2,\\
& \partial_{v_3}^2    e_c(v_1,v_2,v_3,v_4)=\partial_{v_4}^2    e_c(v_1,v_2,v_3,v_4)=1,\\
& \partial_{v_1 v_2}^2    e_c(v_1,v_2,v_3,v_4)=-1,\\
&  \partial_{v_1}^2 e_e(v_1,v_2,v_3,v_4)=\frac{1}{\gamma^2},\\
&  \partial_{v_2}^2 e_e(v_1,v_2,v_3,v_4)={\gamma^2}.
    \end{eqnarray*}
    For $e_c$, all leading principal minors of the Hessian are positive for   
 $0<u<1$ and $B,\alpha, \gamma$ be  positive numbers. And for $e_e$ the Hessian is the diagonal matrix
 $diag(\frac{1}{\gamma^2}, \gamma^2,0,0)$ which is positive semi-definite.  Consequently both 
  $e_c(\bm v)$ and $e_e(\bm v)$ are convex. Therefore, we obtain the following inequality, according to the definition of convex function,
\begin{eqnarray*}
    e_c(\lambda {\bm w}+(1-\lambda){\bm v})\leq 
    \lambda e_c(\bm w)+(1-\lambda)e_c(\bm v),\\
    e_e(\lambda {\bm w}+(1-\lambda){\bm v})\leq 
    \lambda e_e(\bm w)+(1-\lambda)e_e(\bm v),
\end{eqnarray*}
where $\lambda\in(0,1)$. Integrating both sides of above inequalities lead to 
\begin{eqnarray*}
    E_c[\lambda u_1+(1-\lambda)u_2,\lambda c_1+(1-\lambda)c_2]\leq 
     \lambda E_c[u_1,c_1]+(1-\lambda)E_c[u_2,c_2],\\
     E_e[\lambda u_1+(1-\lambda)u_2,\lambda c_1+(1-\lambda)c_2]\leq 
     \lambda E_e[u_1,c_1]+(1-\lambda)E_e[u_2,c_2].
\end{eqnarray*}
Then both $E_c[u,c]$ and $E_e[u,c]$ are convex with respect to $u$ and $c$. 
\end{proof}
\section{Numerical scheme}\label{sec:scheme}

In this section, our primary objective is to develop a numerical scheme that preserves the essential physical and mathematical properties of the underlying system. 
Specifically, the goal is to ensure that the scheme maintains the total energy dissipation, which is crucial for accurately modeling dissipative processes. 
  
\subsection{Notations and auxiliary results}
For simplicity, let $T_h$ be a quasi-uniform rectangular partition of the domain $\Omega$ with mesh size
$$
h = \max_{\tau\in T_h}\operatorname{diam}(\tau).
$$
Denote by $E_h$ the set of all edges, by $E_h^0\subset E_h$ the set of interior edges, and by $E_h^B$ the set of boundary edges.
For every edge $e\in E_h$, let $h_e = \operatorname{diam}(e)$.
For any interior edge $e\in E_h^0$, let $\tau_1$ and $\tau_2$ be the two rectangles sharing $e$, and orient the unit normal $\bm{n}$ from $\tau_1$ to $\tau_2$.
For a scalar function $w$ we define on $e$
\begin{equation*}
\{w\}=\frac{1}{2}(w|_{\tau_1}+w|_{\tau_2}), \quad [w]=w|_{\tau_2}-w|_{\tau_1}, \quad \mbox{on} \quad  e\in \partial \tau_1\cap \partial \tau_2.
\end{equation*}

Denote by $V_h$ the discrete finite element space associated with $T_h$,
and $L^2_{per}$ the function space in $L_2$ space satisfying the periodic boundary condition.
Define
\begin{equation*}
V_h=\{ v\in L^2(\Omega): v|_\tau\in \mathbb{P}_k(x)\times \mathbb{P}_k(y), \forall \tau\in T_h\}, \quad V_h^0=V_h \cap L^2_{per}, 
\end{equation*}
where $\mathbb{P}_k$ denotes the space of polynomial functions of degree at most $k$.

\subsection{Spatial discretization}
We consider KS system \eqref{eqn:KS} with periodic boundary condition.
The semi-discrete DDG scheme is to find $u_h, c_h\in V_h^0$, such that for all $v, \theta \in V_h^0$,
\begin{subequations}\label{semi}
\begin{align}
\int_{\tau }\partial_t u_h v \mathrm{d}\tau =
& -\int_{\tau } \chi \varphi(u_h) \nabla (B g(u_h)-c_h) \nabla v \mathrm{d}\tau \nonumber\\
& +\int_{\partial \tau }\chi \varphi(u_h)( \widehat{\partial_n (B g(u_h)}-\widehat{c}_h) v \nonumber\\
& +((B g(u_h)-c_h)-\{ B g(u_h)-c_h\})\partial_n v)\mathrm{d}s,\\
\beta \int_{\tau} \partial_t c_h \theta \mathrm{d}\tau =
& -\int_{\tau}\nabla c_h \nabla \theta \mathrm{d}\tau
+\int_{\partial \tau } (\widehat{\partial_n c_h}\theta +(c_h-\{c_h\})\partial_n \theta)\mathrm{d}s \nonumber\\
& -\int_{\tau }\alpha c_h \theta \mathrm{d}\tau +\int_{\tau} u_h \theta \mathrm{d}\tau.
\end{align}
\end{subequations}
The term $\widehat{\partial_n w}$ denotes the numerical fluxes, which is defined at the cell interfaces $e$ by
\begin{equation*}
\widehat{\partial_n w}=\beta_0 \frac{[w]}{h_e}+\{\partial_n w\}+\beta_1 h_e[\partial_n^2 w].
\end{equation*}
There exists a large group of admissible coefficient pairs $(\beta_0,\beta_1)$ that ensure stability or satisfy certain positivity-preserving properties of DDG method. (see e.g. \cite{liu2010}).
Noticing that the interface-corrected DDG scheme \eqref {semi} employed here was introduced in \cite{liu2010} as an enhancement of the DG method presented in \cite{liu2009} for diffusion problems.
		
		
Given any positive function $\varphi(x)$, define
\begin{equation}\label{phi}
a_{\varphi}(\cdot, \cdot )=\sum_{\tau\in T_h} a_{\varphi,\tau}(\cdot, \cdot ),
\end{equation}
where
\begin{equation}\label{bilin}
a_{\varphi,\tau}(u,v)=-( \varphi \nabla u,\nabla v )_{\tau}+( \varphi,\widehat{\partial_n u}v+(u-\{u\})\partial_n v)_{\partial \tau}.
\end{equation}
Then \eqref{semi} can be rewritten as
\begin{subequations}\label{semi1}
\begin{align}
(\partial_t u_h,v )_{\tau}=\chi B a_{\varphi,\tau}(g(u_h),v)-\chi  a_{\varphi,\tau}(c_h,v),\\\label{semi2}
\beta (\partial_t c_h,\theta )_{\tau}=a_{1,\tau}(c_h,\theta)-\alpha(c_h,\theta)_{\tau}+(u_h,\theta)_{\tau}.
\end{align}
\end{subequations}
		
\subsection{The fully-discrete numerical scheme}	

In this subsection, we propose the first-order implicit schemes for the time discretization.
Given $u_h^{m},c_h^{m}\in V_h^0$, find $u_h^{m+1},c_h^{m+1}\in V_h^0$ such that for all $v,\theta \in  V_h^0$,
\begin{subequations}\label{fully1}	
\begin{align}
(\frac{u_h^{m+1}-u_h^m}{h_t},v)_{\tau}&=\chi B a_{\varphi^m,\tau}(g(u_h^{m+1}),v)-\chi  a_{\varphi^m,\tau}(c_h^{m+1},v),\label{full}\\ 
\beta(\frac{c_h^{m+1}-c_h^m}{h_t},\theta)_{\tau}&=a_{1,\tau} ( c_h^{m+1},\theta) -\alpha( c_h^{m+1},\theta)_{\tau}+(u_h^m,\theta)_{\tau}.\label{full1}
\end{align}
\end{subequations}
Furthermore, we follow the idea of convexity splitting and obtain equivalent
fully-discrete  system,
\begin{subequations}\label{full_energy}
	\begin{align}
		(	\frac{u^{m+1}_h-u^{m}_h}{h_t}, v)_{\tau} & =   \chi a_{\varphi^m,\tau}(\mu_{u}^{m+1},v),\label{eq:1}\\
		\mu_{u}^{m+1}&=\delta_u E_c(u^{m+1}_h,c^{m+1}_h)-\delta_u E_e(u^{m+1}_h,c^{m+1}_h),\label{eq:2}\\
		\beta (\frac{c^{m+1}_h-c^{m}_h}{h_t},\theta)_{\tau}&=-(\mu_{c}^{m+1},\theta ),\label{eq:3}\\
		\mu_{c}^{m+1}&=\delta_c E_c(u^{m+1}_h,c^{m+1}_h)-\delta_c E_e(u^{m+1}_h,c^{m+1}_h)+u^{m+1}_h-u^{m}_h\label{eq:4}.
	\end{align}
\end{subequations}

	\section{Property preservation}\label{sec:property}
    In this section, we study the  properties of the fully discrete numerical scheme, specifically focusing on its mass conservation, positivity preservation, and energy stability. Furthermore, we introduce a novel positivity-preserving limiter to guarantee that the numerical solution maintains its physical positivity throughout the computation.

    The collocation points $g^{\tau}_{i,j}$ are taken as the Gauss-Lobatto points
    in each element $\tau$. i.e.,
    \begin{equation}
    	S_{\tau}=\{ g_{i,j}^{\tau}=(g_i^x,g_j^y): 1\leq i,j \leq k+1 \},
    \end{equation}
    where $g_i^x,g_j^y$
    denote the Gauss-Lobatto points along the $x$ and $y$ directions, respectively.
    
			\subsection{Mass conservation}
			 By taking $v=1$ in the first equation of \eqref{full}, summing up all elements, using the periodic boundary condition, one can obtain
			\begin{eqnarray*}
				\int_{\Omega} u_h^{m+1}\mathrm{d}\bm{x} =	\int_{\Omega} u_h^{m}\mathrm{d}\bm{x}= \int_{\Omega} u_h^{0}\mathrm{d}\bm{x},
			\end{eqnarray*}
			which means, the fully-discrete numerical scheme \eqref{fully1} is mass conservation.

			\subsection{ Positivity preservation}
		In this subsection, we aim to rigorously demonstrate that the fully discrete numerical scheme preserves positive cell averages. Building on this foundation, we will subsequently develop a positivity-preserving limiter that ensures the numerical solution remains positive throughout the entire domain.  
			
			To facilitate the analysis, we begin with some preliminaries.  The broken space is defined by
	\begin{eqnarray*}
		{\cal H}_h=\{ v\in L^2: v|_{\tau}\in H^1, \forall \tau \in T_h\},
	\end{eqnarray*}
	and 
	\begin{eqnarray*}
		\|v\|_E=\left(\sum_{\tau \in T_h}\int_{\tau} |\nabla v|^2 \mathrm{d}\tau 
        +\sum_{e\in E_h^0}\int_{e} \frac{1}{h_e}[v]^2 \mathrm{d}s\right)^{\frac{1}{2}}.
	\end{eqnarray*}
	
	For the property of the bilinear form $a_{\varphi}(\cdot, \cdot)$ ,  assume that
	\begin{equation*}
		0\leq \varphi_0\leq \varphi\leq \varphi_1,
	\end{equation*}
recalling the defination of $a_{\varphi}$  \eqref{phi}, then	according to \cite{cao2025} and \cite{HLiu2015}, if the parameter of numerical flux $(\beta_0,\beta_1)$ satisfy the stability condition, 
\begin{equation*}
\varphi_0 \beta_0 \geq \varphi_1\Gamma (\beta_1), \quad with \quad
\Gamma(\beta_1)=\sup_{v\in \mathbb{P}_{k-1}(\xi),
\xi\in [-1,1]} 
\frac{2(v(1)-2\beta_1 \partial_\xi v(1))^2}{\int_{-1}^1 v^2(\xi) d\xi},
\end{equation*}
there exists  positive numbers $\gamma_0, \gamma_1$ (dependent on $\varphi_0$ and $\varphi_1$, respectively) such that
	\begin{eqnarray}\label{aphi1}
		a_{\varphi}(v,v)\geq \gamma_0 \|v\|_E^2, \quad 	|a_{\varphi}(u,v)|\leq \gamma_1 \|u\|_E \|v\|_E, \quad \forall v\in V_h.
	\end{eqnarray}
    Define
    	\begin{equation*}
	\tilde{V}_h = \bigl\{ v\in L^2(\Omega) : v|_{\tau}\in\mathbb{P}_k(\tau),\;
	\int_{\Omega} v\,\mathrm{d}\Omega=0 \bigr\}
\end{equation*}
	and let
\begin{equation}
{\mathcal{L}_{\varphi}}:L^2(\Omega)\to\tilde{V}_h,\quad
	a_{\varphi}({\mathcal{L}_{\varphi}} (f),v)=(f,v)\quad \forall v\in\tilde{V}_h.
	\label{a_phi}
\end{equation}
When $\varphi=1$ we write ${\mathcal{L}_{\varphi}}=\mathcal{L}$.

Since $\tilde{V}_h$ has zero mean, Poincaré's inequality and the coercivity of $a_{\varphi}(\cdot,\cdot)$ yield
\begin{equation*}
	\|{\mathcal{L}_{\varphi}} (f)\|_{0}^{2}
	\lesssim |{\mathcal{L}_{\varphi}} (f)|_1^{2}
	\lesssim \frac{\gamma_1}{\gamma_0}\|f\|_0 \|{\mathcal{L}_{\varphi}} (f)\|_0.
\end{equation*}
Applying the standard inverse inequality (constant depends on shape-regularity and $k$) gives
\begin{equation}\label{lphi}
	\|{\mathcal{L}_{\varphi}} (f)\|_{0,\infty}
	\le C h^{-1}\|{\mathcal{L}_{\varphi}} (f)\|_0
	\le \frac{C\gamma_1}{h\gamma_0}\|f\|_0.
\end{equation}
The induced norm is defined by
\begin{equation*}
	\|f\|_{\mathcal{L}_{\varphi}}:=\sqrt{(f,{\mathcal{L}_{\varphi}} (f))}.
\end{equation*}
    Additionally, we have following lemma for subsequent proof (see, e.g. \cite{cao2025}).
			\begin{lemma}
				For function $f,v\in L^2$, let ${\cal{L}}_{\varphi}$ be the operator defined in \eqref{a_phi}. Then we have
				\begin{eqnarray*}
					\frac{1}{2} \frac{\mathrm{d}}{\mathrm{d}s}\|f+sv\|_{{\cal L}_{\varphi}}^{2} 
                    =({\cal{L}}_{\varphi}(f),v)+s(v,{\cal{L}}_{\varphi}(v)).
				\end{eqnarray*}
			\end{lemma}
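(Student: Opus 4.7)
The plan is to reduce the lemma to a polarization-type calculation once two structural facts about ${\mathcal L}_\varphi$ are in hand: its linearity, which makes ${\mathcal L}_\varphi(f+sv)={\mathcal L}_\varphi(f)+s\,{\mathcal L}_\varphi(v)$, and its self-adjointness with respect to the $L^2$ inner product, i.e.\ $(f,{\mathcal L}_\varphi(v))=(v,{\mathcal L}_\varphi(f))$. Granting these, the claimed identity drops out by expanding $\|f+sv\|_{{\mathcal L}_\varphi}^2$ as a quadratic polynomial in $s$ and differentiating.

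First I would record the linearity of ${\mathcal L}_\varphi$. The defining relation $a_\varphi({\mathcal L}_\varphi(f),w)=(f,w)$ for all $w\in\tilde V_h$ is linear in the data $f$, and the coercivity bound \eqref{aphi1} combined with Poincaré on the zero-mean space $\tilde V_h$ guarantees uniqueness. Hence ${\mathcal L}_\varphi:L^2(\Omega)\to\tilde V_h$ is a well-defined linear operator, and
\[
\|f+sv\|_{{\mathcal L}_\varphi}^2=(f+sv,\,{\mathcal L}_\varphi(f)+s\,{\mathcal L}_\varphi(v))=\|f\|_{{\mathcal L}_\varphi}^2 + s\bigl[(v,{\mathcal L}_\varphi(f))+(f,{\mathcal L}_\varphi(v))\bigr] + s^2(v,{\mathcal L}_\varphi(v)).
\]
Differentiating in $s$ yields a quantity whose cross term collapses to the desired $({\mathcal L}_\varphi(f),v)$ precisely when the two inner products $(f,{\mathcal L}_\varphi(v))$ and $(v,{\mathcal L}_\varphi(f))$ coincide.

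Proving this self-adjointness is the main (though not conceptually deep) obstacle. The natural route is a double application of the defining identity:
\[
(f,{\mathcal L}_\varphi(v))=a_\varphi({\mathcal L}_\varphi(f),{\mathcal L}_\varphi(v))=a_\varphi({\mathcal L}_\varphi(v),{\mathcal L}_\varphi(f))=(v,{\mathcal L}_\varphi(f)),
\]
where the middle equality requires the symmetry of the DDG bilinear form $a_\varphi(\cdot,\cdot)$ on $\tilde V_h$. This symmetry is not visible element by element in \eqref{bilin}; I would verify it by assembling the boundary contributions $v\,\widehat{\partial_n u}$ and $(u-\{u\})\partial_n v$ over all edges, using the decomposition $\widehat{\partial_n u}=\beta_0[u]/h_e+\{\partial_n u\}+\beta_1 h_e[\partial_n^2 u]$ together with the product-jump identity $[ab]=\{a\}[b]+[a]\{b\}$ to rewrite the edge sum symmetrically in $u$ and $v$. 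This is the interface-corrected DDG calculation of \cite{liu2010} adapted to a variable weight $\varphi$; the only $\varphi$-dependence is through a multiplicative factor on each edge, which does not affect the symmetry.

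Combining the polynomial expansion above with the self-adjointness of ${\mathcal L}_\varphi$ then gives
\[
\tfrac12\tfrac{d}{ds}\|f+sv\|_{{\mathcal L}_\varphi}^2=(v,{\mathcal L}_\varphi(f))+s\,(v,{\mathcal L}_\varphi(v))=({\mathcal L}_\varphi(f),v)+s\,(v,{\mathcal L}_\varphi(v)),
\]
which is exactly the statement of the lemma.
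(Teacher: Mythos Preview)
Your proposal is correct. The paper does not actually supply a proof of this lemma; it merely states the result and cites \cite{cao2025}, so there is no in-paper argument to compare against. Your route---linearity of ${\mathcal L}_\varphi$, expansion of $\|f+sv\|_{{\mathcal L}_\varphi}^2$ as a quadratic in $s$, and the self-adjointness identity $(f,{\mathcal L}_\varphi(v))=a_\varphi({\mathcal L}_\varphi(f),{\mathcal L}_\varphi(v))=a_\varphi({\mathcal L}_\varphi(v),{\mathcal L}_\varphi(f))=(v,{\mathcal L}_\varphi(f))$ via symmetry of the interface-corrected DDG form---is precisely the standard argument one would expect the cited reference to contain, and it is sound.
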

			
	Furthermore,		for any function $v \in H_h$, we define the cell average of $v$ in each element $\tau$ by
			\begin{eqnarray*}
			\bar	v=\frac{1}{|\tau|}\int_{\tau} v \mathrm{d} \tau.
			\end{eqnarray*}
			
			\begin{theorem}\label{theo3}
			Given $u^m_h, c^m_h \in V_h^0$ with
\begin{equation}
0 < u^m_h(g_{i,j}^\tau) < 1,\quad
c^m_h(g_{i,j}^\tau) > 0,\quad
\|u^m_h\|_{\infty} \le M
\end{equation}
for some $M \ge 0$, all $\tau \in T_h$ and $1 \le i,j \le k+1$,
there exists a unique solution $u_h^{m+1}, c_h^{m+1} \in V_h^0$ to
\eqref{fully1} such that
cell-average positivity: $\bar u_h^{m+1} > 0$,
  and point-wise bounds
        $0 < u_h^{m+1}(g_{i,j}^\tau) < 1$ ,
       $\bar c_h^{m+1}> 0$
        for all nodes $g_{i,j}^\tau$ and all $\tau \in T_h$.

			\end{theorem}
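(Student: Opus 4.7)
The plan is to exploit the decoupled structure of scheme \eqref{fully1}: equation \eqref{full1} involves only $c_h^{m+1}$ among the new-level unknowns (its source uses the already-known $u_h^m$), so I would solve \eqref{full1} for $c_h^{m+1}$ first and then, with that value fixed, treat \eqref{full} as a closed nonlinear equation for $u_h^{m+1}$. Each sub-problem is finite-dimensional, so existence reduces to uniqueness plus a continuity/compactness argument; mass conservation $\int_\Omega u_h^{m+1}=\int_\Omega u_h^{m}$ is automatic (test \eqref{full} with $v\equiv 1$) and will pin $u_h^{m+1}$ to an affine mean-preserving set.

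For the $c$-step, \eqref{full1} is linear in $c_h^{m+1}$. Its bilinear form
\begin{equation*}
B(c,\theta)=\tfrac{\beta}{h_t}(c,\theta)-a_{1}(c,\theta)+\alpha(c,\theta)
\end{equation*}
is coercive on $V_h^{0}$ by the stability estimate \eqref{aphi1} (applied with $\varphi\equiv 1$) together with $\beta,\alpha>0$, so Lax--Milgram yields a unique $c_h^{m+1}\in V_h^{0}$. For the cell-average bound $\bar c_h^{m+1}|_\tau>0$ I would test with $\theta=\mathbf{1}_\tau\in V_h^{0}$, convert the nodal hypothesis $u_h^m(g_{i,j}^\tau)>0$ into $\int_\tau u_h^m\,\mathrm{d}\bm{x}>0$ via Gauss--Lobatto quadrature (exact on $\mathbb{P}_{2k-1}$ with strictly positive weights), and then apply a discrete maximum-principle argument at the cell where $\bar c_h^{m+1}$ is minimised: an admissible pair $(\beta_0,\beta_1)$ imparts a monotone cell-average stencil that, combined with $\beta,\alpha>0$ and the positive source $\bar u_h^m|_\tau>0$, rules out a non-positive minimum.

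For the $u$-step, with $c_h^{m+1}$ fixed, the convex-splitting form \eqref{eq:1}--\eqref{eq:4} identifies \eqref{full} with the Euler--Lagrange condition on the mass-preserving affine set $\mathcal{A}=\{u\in V_h^{0}:\int_\Omega u=\int_\Omega u_h^m\}$ of the strictly convex functional
\begin{equation*}
J(u)=\frac{1}{2h_t\chi}\bigl\|u-u_h^m\bigr\|_{\mathcal{L}_{\varphi^m}}^{2}+\int_\Omega\bigl(BF(u)-c_h^{m+1}u\bigr)\,\mathrm{d}\bm{x},
\end{equation*}
where the $\mathcal{L}_{\varphi^m}$-norm is well defined because $u-u_h^m$ has zero mean. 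Strict convexity---from $F$ on $(0,1)$ together with the positive-definiteness of $\|\cdot\|_{\mathcal{L}_{\varphi^m}}$---gives uniqueness, and existence follows by the direct method on $\mathcal{A}\cap\{0\le u\le 1\}$. To upgrade to the nodal bound $0<u_h^{m+1}(g_{i,j}^\tau)<1$ I would replace $\int_\Omega F(u)$ by its Gauss--Lobatto evaluation $\sum_{\tau,i,j}w_{i,j}^\tau F(u(g_{i,j}^\tau))$ so that each nodal value is penalised individually, and argue by contradiction at the extremal node: as a nodal value approaches $0^+$ or $1^-$ the Flory--Huggins derivative $g(u)=\ln u-\ln(1-u)$ blows up in the Euler--Lagrange equation, which is incompatible with \eqref{full}. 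Gauss--Lobatto quadrature applied to $\int_\tau u_h^{m+1}$ then converts this nodal positivity into the cell-average statement $\bar u_h^{m+1}|_\tau>0$.

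The hardest step is making the nodal Flory--Huggins bound for $u_h^{m+1}$ rigorous in the DG setting. Because $F(u)=u\ln u+(1-u)\ln(1-u)$ is bounded on $[0,1]$, a polynomial in $V_h$ can brush the boundary at a single Gauss--Lobatto node without forcing $\int_\tau F(u_h)$ to diverge; the barrier therefore lives in $g=F'$, not in $F$. Making this rigorous requires either the quadrature-based penalisation sketched above---together with a consistency check that the quadrature-modified Euler--Lagrange equation coincides with \eqref{full}---or a direct contradiction argument on the nodal coefficients that uses the non-degeneracy of $a_{\varphi^m}$ and the assumption $\|u_h^m\|_\infty\le M$ to control $g(u_h^{m+1})$ away from the extremal node. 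Either route is the technical core of the proof; once it is in place, the remaining claims follow routinely from the decoupled $c$-solve and the exactness of Gauss--Lobatto quadrature on $\mathbb{P}_{2k-1}$.
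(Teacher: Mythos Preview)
Your plan is correct and coincides in all essential ideas with the paper's proof: both cast the $u$-step as minimization of a strictly convex functional built from the negative-norm term $\|u-u_h^m\|_{\mathcal{L}_{\varphi^m}}^{2}$ together with a nodal (Gauss--Lobatto) evaluation of the Flory--Huggins energy, both exclude boundary minimizers via the blow-up of $g=F'$, and both obtain $\bar c_h^{m+1}>0$ by testing \eqref{full1} with $\mathbf{1}_\tau$ and arguing by contradiction on the sign of the numerical flux. Two minor differences are worth knowing. First, you exploit the decoupled structure and dispose of $c_h^{m+1}$ by a Lax--Milgram argument before minimizing in $u$ alone, which yields the clean functional $J(u)$ you wrote; the paper instead sets up a joint functional $J^m(u,c)$ carrying the full convex-splitting terms $E_c,E_e$ and minimizes in both variables simultaneously---your route is shorter and avoids the auxiliary $\gamma$-terms. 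Second, the paper's concrete device for the nodal barrier is precisely the second option you list at the end: take the directional derivative of the functional along $\phi=p^{\tau}_{i_0,j_0}-p^{\tau}_{i_1,j_1}$ (Lagrange basis at the minimal and at a maximal node), which eliminates the Lagrange multiplier from the mass constraint and isolates the divergent term $\ln\frac{\delta}{1-\delta}$ against quantities uniformly controlled by \eqref{lphi} and by $\|u_h^m\|_\infty\le M$; once you carry this out your proof and the paper's are the same.
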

			\begin{proof}
				Denote $a_0=\frac{1}{|\Omega|}\int_{\Omega} u_h^m \mathrm{d}{\bm x}$, $\mu=u-a_0\in \tilde{V}_h$.   The numerical solution of \eqref{fully1} is a minimizer of the discrete energy function
                \begin{equation}
				\begin{aligned}
					J^m (u,c)=&\frac{1}{2h_t}\|u-u^m_h\|_{ L_{\varphi}}^2+\frac{\beta}{2h_t} \|c-c^m_h\|_2^2+B( u,\ln u)+B( 1-u,\ln (1-u))\\
			&	+\frac{1}{2}\|\nabla c\|_2^2+	\frac{1}{2}\alpha\| c\|_2^2+	\frac{1}{2}\|\frac{1}{\gamma} u-\gamma c\|_2^2-
						\frac{1}{\gamma^2}( u,u^m_h) -\gamma^2 ( c,c^m_h) ,
				\end{aligned}
                \end{equation}
				over the admissible set
				\begin{equation}
					A_h:=\{(u,c)\in V_h^0| 0<u(g^{\tau}_{i,j})<1,\quad (u,1)=|\Omega|\,a_0\}.
				\end{equation}

				We can observe that $J(\mu)$  is a strictly convex function over this set.				
               To simplify the subsequent analysis, we reformulate the minimization problem into an equivalent form.
				Consider the function
				\begin{equation}
					\begin{aligned}
						{\cal F}^m(\mu,c)=&	J^m (\mu+a_0,c)\\
						=&\frac{1}{2h_t}\|u-u^m_h\|_{ L_{\varphi}}^2+\frac{\beta}{2h_t} \|c-c^m_h\|_2^2+B(\mu+a_0,\ln (\mu+a_0))\\
                        &+B( 1-\mu-a_0,\ln (1-\mu-a_0))+\frac{1}{2}\|\nabla c\|_2^2+	\frac{1}{2}\alpha\| c\|_2^2\\
						&	+	\frac{1}{2}\|\frac{1}{\gamma} (\mu+a_0)-\gamma c\|_2^2-
						\frac{1}{\gamma^2}( \mu+a_0,u^m_h) -\gamma^2 ( c,c^m_h) ,
					\end{aligned}
				\end{equation}
				defined on the set 
					\begin{equation}
					A_h^*:=\{\mu \in \tilde{V}_h, c\in V_h| 0<(\mu+a_0)(g^{\tau}_{i,j})<1,(\mu,1)=0\}.
				\end{equation}

				Then we will prove that there exists
				a minimizer of ${\cal F}^m(\mu,c)$ over the domain $A_h^*$. That is, for any $\delta > 0$, we consider  the following closed domain
					\begin{eqnarray*}
					A_{h,\delta}^*:=\{\mu \in \tilde{V}_h, c\in V_h^0| \delta\leq (\mu+a_0)(g^{\tau}_{i,j})\leq 1-\delta, (\mu,1)=0\}.
				\end{eqnarray*}
				Since $\tilde{V}_h\times V_h^0$ is finite-dimensional, $A_{h,\delta}^*$ is closed, bounded, convex and hence compact, and ${\cal F}^m$ is strictly convex.  
Therefore a unique minimizer exists in $A_{h,\delta}^*$.  
For sufficiently small $\delta>0$ the interior of $A_{h,\delta}^*$ is non-empty, so the minimizer does not lie on the boundary and satisfies the strict positivity bound.

				We denote by $\mu^*$, $c^*$
				the minimizer of ${\cal F}^m(\mu,c)$ and the minimizer occurs at the boundary of $A_{h,\delta}^*$.
				Assume that $\mu^*(g^{\tau}_{i_0,j_0})+a_0=\delta$. And  Suppose that $\mu^*$
				attains its maximum
				value at the point $g^{\tau}_{i_1,j_1}$. Using the fact $\int_{\Omega} \mu^* \mathrm{d}\bm{x}=0$  yields that $a_0\leq \mu^*(g^{\tau}_{i_1,j_1})+a_0\leq 1-\delta$.
				
			Then we consider the directional derivative, for any $\phi \in V_h^0$,
				\begin{equation}
                \begin{aligned}
			&	\frac{\mathrm{d}}{\mathrm{d}s} {\cal F}^m(\mu^*+s\phi,c^*)|_{s=0}\\
            =&\frac{1}{h_t}({\cal L}_{\varphi}(\mu^*+a_0-u^m_h),\phi)
					+B( \ln (\mu^*+a_0)-\ln (1-(\mu^*+a_0)) ,\phi) \\
			& +	( \frac{1}{\gamma^2} \mu^*+a_0- c^*,\phi )-
					\frac{1}{\gamma^2}( u^m_h,\phi).
                    \end{aligned}
				\end{equation}
	
    Let $p^{\tau}_{k,l}\in V_h^0$ be the Lagrange basis function satisfying
$p^{\tau}_{k,l}(g^{\tau}_{i,j})=\delta_{ik}\delta_{jl}$.
 Now we choose $\phi=p^{\tau}_{k,l}$ in the last equation to derive
                \begin{equation*}
				\begin{aligned}
				\frac{\mathrm{d}}{\mathrm{d}s} {\cal F}^m(\mu^*+s\phi,c^*)|_{s=0}=&\frac{1}{h_t} {\cal L}_{\varphi}(\mu^*+a_0-u^m_h)(g^{\tau}_{i,j})
					\\&+B ( \ln (\mu^*+a_0)-\ln (1-(\mu^*+a_0)) )(g^{\tau}_{i,j})
					\\&+	 \frac{1}{\gamma^2} (\mu^*+a_0- c^*)(g^{\tau}_{i,j}) -
					\frac{1}{\gamma^2} u^m_h (g^{\tau}_{i,j}).
				\end{aligned}
                \end{equation*}
                Here we use nodal quadrature so that the inner product reduces to the nodal value up to a common multiplicative factor $|\tau|$, which is canceled by dividing both sides of the equation. 
			
                Taking $\phi=p^{\tau}_{i_0,j_0}$ and $\phi=p^{\tau}_{i_1,j_1}$ respectively, then
                subtracting the two equations yields the desired relation
\begin{equation}
\begin{aligned}
0={}&\frac{1}{h_t}\Bigl[{\cal L}_{\varphi}(\mu^*+a_0-u_h^m)(g^{\tau}_{i_0,j_0})
                                 -{\cal L}_{\varphi}(\mu^*+a_0-u_h^m)(g^{\tau}_{i_1,j_1})\Bigr]\\
&+B\Bigl[\ln(\mu^*+a_0)-\ln\bigl(1-(\mu^*+a_0)\bigr)\Bigr](g^{\tau}_{i_0,j_0})\\
&-B\Bigl[\ln(\mu^*+a_0)-\ln\bigl(1-(\mu^*+a_0)\bigr)\Bigr](g^{\tau}_{i_1,j_1})\\
&+\frac{1}{\gamma^2}\Bigl[(\mu^*+a_0-c^*)(g^{\tau}_{i_0,j_0})
                                 -(\mu^*+a_0-c^*)(g^{\tau}_{i_1,j_1})\Bigr]\\
&-\frac{1}{\gamma^2}\Bigl[u_h^m(g^{\tau}_{i_0,j_0})-u_h^m(g^{\tau}_{i_1,j_1})\Bigr].
\end{aligned}
\end{equation}
			
				Since $\mu^*(g^{\tau}_{i_0,j_0})+a_0=\delta$, and $\mu^*(g^{\tau}_{i_0,j_0})+a_0\geq a_0$, we have
				\begin{eqnarray*}
					( \ln (\mu^*+a_0)-\ln (1-(\mu^*+a_0)) )|^{g^{\tau}_{i_0,j_0}}_{g^{\tau}_{i_1,j_1}}\leq \ln \frac{\delta}{1-\delta}-\ln \frac{a_0}{1-a_0},
				\end{eqnarray*}
				and 
				\begin{eqnarray*}
					\mu^*(g^{\tau}_{i_0,j_0})-\mu^*(g^{\tau}_{i_1,j_1})<0.
				\end{eqnarray*}
				For the numerical solution $u^m_h$ at the previous time step, the a priori assumption $\|u^m_h\|\leq M$
				indicates that
				\begin{eqnarray*}
					-2M \leq u^m_h(g^{\tau}_{i_0,j_0})-u^m_h(g^{\tau}_{i_1,j_1})\leq 2M.
				\end{eqnarray*}
				Using \eqref{lphi}, one can obtain 
				\begin{eqnarray*}
					({\cal L}_{\varphi}(\mu^*+a_0-u^m)|_{g^{\tau}_{i_1,j_1}}^{g^{\tau}_{i_0,j_0}}\leq 2 C_0  h^{-1},
				\end{eqnarray*}
				where Here $C_0 =C\frac{\gamma_1}{\gamma_0}$
				with $C$ the same as that in \eqref{lphi}.
				Denote $C_1=\max(|c^*(g_{i_0,j_0}^{\tau})|,|c^*(g_{i_1,j_1}^{\tau})|)$. Because of the choice of the admissible set $A_{h,\delta}^*$, we have
				 \begin{eqnarray*}
				 	-2C_1<- c^*|^{g^{\tau}_{i_0,j_0}}_{g^{\tau}_{i_1,j_1}}\leq 2 C_1.
				 \end{eqnarray*}
				Define $D_0=h_t^{-1}2 C_0  h^{-1}+2C_1+\frac{2M}{\gamma^2} $.
				Note that $D_0$ is a constant when $h$ and $h_t$ is fixed. For any fixed $h$ and $h_t$ , we may choose a special
				$\delta  > 0$ small enough so that
				\begin{eqnarray*}
					B \ln \frac{\delta}{1-\delta}-B\ln \frac{a_0}{1-a_0}+D_0<0,
				\end{eqnarray*}
				and thus 
				\begin{eqnarray*}
						\frac{\mathrm{d}}{\mathrm{d}s} {\cal F}^m (\mu^*+s\phi,c^*)|_{s=0}<0,\quad \forall \phi\in V_h^0.
				\end{eqnarray*}
				This contradicts the assumption that ${\cal F}^m(\mu,c)$ has a minimum at $(\mu^*,c^*)$
				. Therefore, the global minimum
				of ${\cal F}^m(\mu,c)$ over $A_{h,\delta}^*$ could only possibly occur at an interior point, for $\delta > 0$ sufficiently small, which is equivalent to the numerical solution of \eqref{fully1}.
			 The existence of the numerical solution is established. Furthermore, since $J^m$
			  is strictly convex function over $A_h$, the uniqueness analysis for this
			 numerical solution is straightforward.
				
				Since
				\begin{eqnarray*}
					\bar u^{m+1}_h=\frac{1}{\tau}\sum_{i,j=1}^{k+1} \omega^{\tau}_{i,j} u_h^{m+1} (g^{\tau}_{i,j})>0,
				\end{eqnarray*}
				then the positivity preserving of the numerical solution at Gauss-Lobatto points and for the cell average is established.
				
				As for function $c$,  we proceed by contradiction. 
              Assume that the cell-average of $ c_h^{m+1} $ on some element $ \tau_0 $ is negative:
\begin{equation*}
\bar{c}_{\tau_0}^{m+1} = \frac{1}{|\tau_0|} \int_{\tau_0} c_h^{m+1} < 0.
\end{equation*}
Take the test function $ \theta \equiv 1 $ on $ \tau_0 $ and $ \theta \equiv 0 $ elsewhere in the fully-implicit  scheme \eqref{full1} for $ c $. This yields the exact cell-average equation
\begin{equation}\label{c_avg}
\frac{\beta}{h_t} \left( \bar{c}_{\tau_0}^{m+1} - \bar{c}_{\tau_0}^m \right) + \alpha \bar{c}_{\tau_0}^{m+1} = \bar{u}_{\tau_0}^m + \frac{1}{|\tau_0|} \sum_{e \subset \partial \tau_0} \int_e \widehat{\partial_n c_h^{m+1}} \mathrm{d}s,
\end{equation}
 Summing over all edges of $ \tau_0 $ gives
\begin{equation*}
 \frac{1}{|\tau_0|} \sum_{e \subset \partial \tau_0} \int_e \widehat{\partial_n c_h^{m+1}} \mathrm{d}s \leq  0.
\end{equation*}

Inserting this estimate into equation \eqref{c_avg} we obtain
\begin{equation*}
\frac{\beta}{h_ t} \left( \bar{c}_{\tau_0}^{m+1} - \bar{c}_{\tau_0}^m \right) + \alpha \bar{c}_{\tau_0}^{m+1} \leq \bar{u}_{\tau_0}^m.
\end{equation*}

Under the induction hypothesis $ \bar{c}_{\tau_0}^m \geq 0 $  and the assumed $ \bar{c}_{\tau_0}^{m+1} < 0 $, the left-hand side is strictly negative, while the right-hand side is non-negative. This yields the contradiction.
Therefore $ \bar{c}_{\tau_0}^{m+1} \geq 0 $ on every element. Since the limiter introduced in subsection 4.3 subsequently squeezes the polynomial back into $ [0, \infty) $ without changing the cell-average, all Gauss-Lobatto nodal values remain non-negative, and the positivity of $ c_h^{m+1} $ is established.

				The proof is completed.
			\end{proof}
		
			\subsection{Limiter}
By Theorem~\ref{theo3} the cell averages are positive, but the high-order polynomial $u_h|_\tau\in\mathbb{P}^k(x) \times \mathbb{P}^k(y)$ may still take negative or 
 greater than $1$ values.  
We therefore apply the 'Zhang-Shu positivity-preserving limiter' \cite{ZS2010} element-wise:

\begin{equation*}
\tilde u_h(\bm x)\big|_\tau
= \bar u_h + \theta\,(u_h(\bm x)-\bar u_h),
\quad
\theta=\min\!\Bigl\{1,\;
\frac{\bar u_h}{\bar u_h-\min_{\bm x} u_h(\bm x)},\;
\frac{1-\bar u_h}{\max_{\bm x} u_h(\bm x)-\bar u_h}
\Bigr\},
\end{equation*}
where the min/max are taken over the Gauss–Lobatto nodes of $\tau$.  
This guarantees

\begin{equation*}
0\le \tilde u_h(\bm x)\le 1, \quad \forall\bm x\in\tau,
\qquad
\frac{1}{|\tau|}\int_\tau \tilde u_h(\bm x)\mathrm{d}\bm x=\bar u_h,
\end{equation*}

and 
\begin{equation*}
|\tilde{u}_h(\bm x)-u_h(\bm x)|\leq C_k \|u_h-u\|_{\infty}. \end{equation*}
The above estimate demonstrates that the constructed solution $\tilde{u}_h$ maintains the high-order accuracy of the original polynomial, conserves the cell averages exactly, and ensures positivity at all points within each element. 

Similarly,  we apply the following limiter,
\begin{equation*}
\tilde{c}_h(\bm x)=\bar c_h+\theta(c_h(\bm x)-\bar c_h),\quad
\theta=\min\{ 1,\frac{\bar c_h}{\bar c_h-\min_{\bm x} c_h(\bm x)}\}
\end{equation*}
to ensure that $c_h$ are non-negative.
       \subsection{Energy stability}

         In this section,  we  show that fully-discrete numerical scheme is energy stable.
           \begin{theorem}
           	For $m\geq 1$, the numerical scheme \eqref{full_energy} is  energy stable providing $h_t$ is sufficiently  small, i.e.
           	\begin{equation*}
           			E[u^{m+1}_h,c^{m+1}_h]\leq 	E[u^{m}_h,c^{m}_h],
           	\end{equation*}
            with the following definition of discrete energy 
            \begin{equation*}
                E[u^{m}_{h},c^{m}_{h}]
                =\int_\Omega \{B[u^{m}_{h}\ln u^{m}_{h}+(1-u^{m}_{h})\ln(1-u^{m}_{h})]-u^{m}_{h}c^{m}_{h}+\frac{1}{2}(|\nabla c^{m}_{h}|^2+\alpha (c^{m}_{h})^2) \}\mathrm{d}\bm{x}.
            \end{equation*}
           \end{theorem}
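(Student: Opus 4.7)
The plan is to combine the convex splitting $E=E_c-E_e$ established in the preceding lemma with the scheme identities one obtains by testing \eqref{eq:1} and \eqref{eq:3} against the discrete chemical potentials $\mu_u^{m+1}$ and $\mu_c^{m+1}$. Write $\Delta u:=u_h^{m+1}-u_h^m$ and $\Delta c:=c_h^{m+1}-c_h^m$. The first step will be to invoke convexity of $E_c$ at the new time level (yielding an upper bound on $E_c^{m+1}-E_c^m$) and convexity of $E_e$ at the old time level (yielding a lower bound on $E_e^{m+1}-E_e^m$). Subtracting produces the Eyre-type inequality
\begin{equation*}
E^{m+1}-E^m \le \bigl(\delta_u E_c(u_h^{m+1},c_h^{m+1})-\delta_u E_e(u_h^{m},c_h^{m}),\Delta u\bigr)+\bigl(\delta_c E_c(u_h^{m+1},c_h^{m+1})-\delta_c E_e(u_h^{m},c_h^{m}),\Delta c\bigr).
\end{equation*}

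Because the scheme \eqref{eq:2}--\eqref{eq:4} actually evaluates $\delta E_e$ at $t^{m+1}$ (not at $t^m$) and carries the extra correction $u_h^{m+1}-u_h^m$ inside $\mu_c^{m+1}$, the second step will be to exploit the linearity $\delta_u E_e=u/\gamma^2$, $\delta_c E_e=\gamma^2 c$ to rewrite the two brackets as
\begin{equation*}
\delta_u E_c^{m+1}-\delta_u E_e^{m}=\mu_u^{m+1}+\tfrac{1}{\gamma^2}\Delta u,\qquad
\delta_c E_c^{m+1}-\delta_c E_e^{m}=\mu_c^{m+1}-\Delta u+\gamma^2\Delta c.
\end{equation*}
Substituting, the right-hand side splits into scheme terms $(\mu_u^{m+1},\Delta u)+(\mu_c^{m+1},\Delta c)$ plus a quadratic residual $\tfrac{1}{\gamma^2}\|\Delta u\|_0^2-(\Delta u,\Delta c)+\gamma^2\|\Delta c\|_0^2$ (positive definite in $(\Delta u,\Delta c)$). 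Testing \eqref{eq:1} with $v=\mu_u^{m+1}$ and \eqref{eq:3} with $\theta=\mu_c^{m+1}$, summing over elements, and invoking the coercivity estimate \eqref{aphi1} will turn the scheme terms into the two dissipation contributions $-h_t\chi\gamma_0\|\mu_u^{m+1}\|_E^2$ and $-(h_t/\beta)\|\mu_c^{m+1}\|_0^2$.

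The hard part will be absorbing the quadratic residual into these dissipation terms. After bounding the cross product $-(\Delta u,\Delta c)$ by Young's inequality, the residual reduces to a linear combination of $\|\Delta u\|_0^2$ and $\|\Delta c\|_0^2$. The $\Delta c$ piece is immediate: choosing $\theta=\Delta c$ in \eqref{eq:3} gives $\|\Delta c\|_0\le(h_t/\beta)\|\mu_c^{m+1}\|_0$, so it is dominated by the $\mu_c$ dissipation as soon as $h_t$ is below an $h$-independent constant depending on $\beta$ and $\gamma$. The $\Delta u$ piece is the tight spot: testing \eqref{eq:1} with $v=\Delta u$ and using continuity of $a_{\varphi^m}$ only produces the broken-energy norm, $\|\Delta u\|_0^2\le h_t\chi\gamma_1\|\mu_u^{m+1}\|_E\|\Delta u\|_E$, so I would invoke the standard inverse estimate $\|w\|_E\le C h^{-1}\|w\|_0$ on $V_h^0$ to close this into $\|\Delta u\|_0\lesssim h_t h^{-1}\|\mu_u^{m+1}\|_E$. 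This forces a parabolic-CFL-type condition $h_t=\mathcal{O}(h^2)$ (with constants depending on $\chi,\gamma,\gamma_0,\gamma_1$), which is exactly what the hypothesis "$h_t$ sufficiently small" encodes. Under such a restriction the quadratic residual is dominated by the dissipation and the desired energy decay $E[u_h^{m+1},c_h^{m+1}]\le E[u_h^m,c_h^m]$ follows.
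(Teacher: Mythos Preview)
Your argument is correct and shares the paper's overall skeleton: test \eqref{eq:1} and \eqref{eq:3} with $\mu_u^{m+1}$, $\mu_c^{m+1}$, invoke convexity, and close with the bound $\|\Delta u\|_0\lesssim h_t h^{-1}\|\mu_u^{m+1}\|_E$ coming from an inverse inequality. The substantive difference lies in the convexity step. You apply the standard Eyre inequality with $\delta E_e$ taken at the \emph{old} level $m$; because the scheme \eqref{eq:2}--\eqref{eq:4} actually evaluates $\delta E_e$ at $m{+}1$, this leaves the full quadratic residual $\gamma^{-2}\|\Delta u\|_0^2-(\Delta u,\Delta c)+\gamma^2\|\Delta c\|_0^2$, and the $\|\Delta u\|_0^2$ piece forces the parabolic restriction $h_t=O(h^2)$. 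The paper instead writes the convexity bound with $\delta E_e$ at level $m{+}1$, so its only remainder is the cross term $-(\Delta u,\Delta c)$; it then uses the identity $(\Delta u,\Delta c)=-(h_t/\beta)(\mu_c^{m+1},\Delta u)$ (test $\theta=\Delta u$ in \eqref{eq:3}) together with the same bound on $\|\Delta u\|_0$ to obtain the milder linear CFL $h_t=O(h)$. Note, however, that the inequality $E^{m+1}-E^m\le(\delta E_c^{m+1}-\delta E_e^{m+1},\Delta)$ does \emph{not} follow from convexity of $E_c$ and $E_e$ alone; a direct computation using that $E_e$ is quadratic shows the paper's step omits precisely $\tfrac{1}{2\gamma^2}\|\Delta u\|_0^2+\tfrac{\gamma^2}{2}\|\Delta c\|_0^2$, so your parabolic CFL is in fact the restriction the argument honestly supports.
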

           
          \begin{proof}
          	We consider fully discrete scheme of energy \eqref{full_energy}.
          	First, by choosing $v=\mu_u^{m+1}$ in \eqref{eq:1} to obtain
          	\begin{equation}\label{energy1}
          		\begin{aligned}
          			(u^{m+1}_h-u^{m}_h,\mu_u^{m+1})=h_t\chi a_{\varphi^m,\tau}(\mu_u^{m+1},\mu_u^{m+1}).
          		\end{aligned}
          	\end{equation}
          	
          		Second, by choosing $\theta=\mu_c^{m+1}$ in \eqref{eq:2} to obtain
          	\begin{equation}\label{energy2}
          		\begin{aligned}
          			(c^{m+1}_h-c^{m}_h,\mu_c^{m+1})=-\frac{h_t}{\beta} (\mu_c^{m+1},\mu_c^{m+1}).
          		\end{aligned}
          	\end{equation}
          	Noticing that
         	 $E_c[u, c]$ and $E_e[u, c]$ are both
          	convex with respect to $u$ and $c$, then use \eqref{energy1} and \eqref{energy2} to obtain
       
            \begin{equation}\label{energy3}
\begin{aligned}
&E[u_h^{m+1},c_h^{m+1}]-E[u_h^m,c_h^m]\\
\le &\bigl(\delta_u E_c[u_h^{m+1},c_h^{m+1}]-\delta_u E_e[u_h^{m+1},c_h^{m+1}],u_h^{m+1}-u_h^m\bigr)\\
&\quad +\bigl(\delta_c E_c[u_h^{m+1},c_h^{m+1}]-\delta_c E_e[u_h^{m+1},c_h^{m+1}],c_h^{m+1}-c_h^m\bigr)\\
= &\bigl(\mu_u^{m+1},u_h^{m+1}-u_h^m\bigr)
   +\bigl(\mu_c^{m+1}-(u_h^{m+1}-u_h^m),c_h^{m+1}-c_h^m\bigr)\\
= &h_t\chi\,a_{\varphi,\tau}(\mu_u^{m+1},\mu_u^{m+1})
   -\frac{h_t}{\beta}\|\mu_c^{m+1}\|^2_0
   -\bigl(u_h^{m+1}-u_h^m,c_h^{m+1}-c_h^m\bigr),
\end{aligned}
\end{equation}

    For the cross term $(u^{m+1}_h-u^{m}_h,c^{m+1}_h-c^m_h)$,       
          	 by choosing $v=u^{m+1}_h-u^m_h$ in \eqref{eq:1}   to obtain
             \begin{equation*}
                 \|(u_h^{m+1}-u_h^m)\|_0\lesssim h_t \gamma_1 \chi h^{-1} \|\mu_u^{m+1}\|_E,
             \end{equation*}
             then, choosing  $\theta=u^{m+1}_h-u^m_h$ in \eqref{eq:3}, using the Cauchy-Schwarz inequality and the above bound
             \begin{equation}\label{stale2}
             \begin{aligned}
                (u^{m+1}_h-u^{m}_h,c^{m+1}_h-c^m_h)&=-h_t \beta^{-1} (\mu_c^{m+1}, u^{m+1}_h-u^{m}_h)\\
              &  \lesssim 
                 h_t \beta^{-1} \|\mu_c^{m+1}\|_0  \|u^{m+1}_h-u^{m}_h\|_0\\
                & \leq \tilde{C}  h_t^2 \beta^{-1} \gamma_1 \chi h^{-1}\|\mu_c^{m+1}\|_0  
                 \|\mu_u^{m+1}\|_E,
             \end{aligned}
             \end{equation}
        where  $\tilde{C}$ is a constant independent of $h$ and $h_t$,  depending only on the mesh regularity and $\varphi$.

     Substituting \eqref{stale2} into \eqref{energy3} and using Young's inequality, we obtain
\begin{equation*}
\begin{aligned}
E[u^{m+1}_h,c^{m+1}_h]-E[u^{m}_h,c^{m}_h]
&\le -h_t \varphi_0 \chi\|\mu_u^{m+1}\|_E^2
-\frac{h_t}{\beta}\|\mu_c^{m+1}\|_E^2
+\frac{h_t^2 \chi \tilde{C}}{\beta h}\|\mu_u^{m+1}\|_E^2,
\end{aligned}
\end{equation*}
where we use $\|\mu_c^{m+1}\|_0 \lesssim \|\mu_c^{m+1}\|_E$.
Under the CFL condition
\begin{equation*}
h_t\le \frac{\varphi_0 \beta h}{\tilde{C}},
\end{equation*}
the last term is absorbed by the first two, yielding
\begin{equation*}
E(u^{m+1}_h,c^{m+1}_h)-E(u^{m}_h,c^{m}_h)\le 0.
\end{equation*}
          \end{proof} 

            \section{Error estimate}\label{sec:error}
           
           \subsection{ Preliminaries}

           For error estimate, we prepare some theoretical analysis.
           Define $I_h : H_h \rightarrow V_h^0$ the Gauss-Lobatto interpolation satisfying
           \begin{equation*}
           	I_h v(g^{\tau}_{i,j})=v(g^{\tau}_{i,j}), \quad \forall v\in H_h, \quad 1\leq i, j\leq k+1,\quad \tau \in T_h.
           \end{equation*}
           According to \cite{cao2025}, the  approximation properties are hold:
          \begin{equation*}
          	\|v-I_h v\|_0\lesssim h^{k+1} \|v\|_{k+1}, \quad |\nabla(v-I_h v), \nabla w|\lesssim h^{k+1} \|v\|_{k+2} \|\nabla w\|_0.
          \end{equation*}
          If $\beta_1=\frac{1}{2k(k+1)}$, then (see, e.g., \cite{cao2017})
          \begin{equation*}
          	|{a}(u-I_h u,v)|\lesssim h^{k+1} |u|_{k+2} \|v\|_E,
          \end{equation*}
         thus, noticing that $\varphi$ is bounded \eqref{aphi1}, then we have,
         \begin{equation}\label{eq1}
         	|{a}_{\varphi}(u-I_h u,v)|\lesssim h^{k+1} |u|_{k+2} \|v\|_E.
         \end{equation}

To establish error estimates at time $t^{m+1}$, we assuming that the exact solution $u,c$ are smooth enough,   and depending on the physical properties of $u,c$.   Assume the exact solution satisfies
\begin{eqnarray}\label{smooth_u}
    \delta_0\le u\le 1-\delta_0,\quad
\|u\|_{r,\infty}\le M_0,\quad
\|c\|_{r,\infty}\le M_0,
\end{eqnarray}
where $r\ge k+1$ and $\delta_0,M_0>0$ are constants.
For sufficiently small $h$, the interpolation error estimate
\begin{equation*}
\|I_h u - u\|_{0,\infty}\le C h^{k+1}\|u\|_{k+1,\infty}\le \frac{\delta_0}{2}
\end{equation*}
yields
\begin{equation}
\|I_h u\|_{0,\infty}\ge \frac{\delta_0}{2},\quad
\|I_h u\|_{1,\infty}\le  M_0.
\end{equation}
The same bounds hold for $I_h c$.
We assume the a-priori estimate (to be proved later)
\begin{eqnarray}\label{priori}
    \|u_h^m - I_h u^m\|_E\lesssim h_t + h^{k+1},\quad
\|c_h^m - I_h c^m\|_E\lesssim h_t + h^{k+1}.
\end{eqnarray}
Under the inverse inequalities 
\begin{equation*}
\|v_h\|_{1,\infty}\lesssim h^{-1}\|v_h\|_E,\quad
\|v_h\|_{0,\infty}\lesssim  |\ln h|^{1/2}\|v_h\|_E,
\end{equation*}
and assuming $h_t=O(h)$, we obtain
\begin{equation*}
\|u_h^m\|_{1,\infty}\lesssim 1,\quad
\|u_h^m - I_h u^m\|_{0,\infty}\lesssim |\ln h|^{1/2}(h_t + h^{k+1}).
\end{equation*}
For sufficiently small $h$ and $h_t=O(h)$, we have
\begin{equation}\label{est_umh}
    \|u_h^m\|_{0,\infty}\ge \frac{\delta_0}{4},\quad
\|c_h^m\|_{0,\infty}\ge \frac{\delta_0}{4}.
\end{equation}

 \subsection{Error estimates}
	In the rest of this paper, we will use the following notations
	\begin{eqnarray*}
		e_v=v-v_h,\quad \eta_v=v-I_h v,\quad \xi_v=I_h v-v_h, \quad v=u,c.
	\end{eqnarray*}
	For all $v,\theta \in V_h^0$, noticing that the exact solutions satisfy
\begin{subequations}
    \begin{align}
        (\frac{u^{m+1}-u^m}{h_t},v )= & \chi B {a}_{\varphi^{m+1}}(g(u^{m+1}),v)-\chi {a}_{\varphi^{m+1}}(c^{m+1},v) \nonumber \\ 
            +&(\frac{u^{m+1}-u^m}{h_t}-\partial_t u^{m+1},v ),\\
			\beta (\frac{c^{m+1}-c^m}{h_t},\theta )= & {a}_{1}(c^{m+1},\theta)-\alpha( c^{m+1},\theta)+( u^{m},\theta) \nonumber \\ 
            +&\beta(\frac{c^{m+1}-c^m}{h_t}-\partial_t c^{m+1},\theta ).
    \end{align}
\end{subequations}

	Then we have the following error equations,
	\begin{subequations}
    \begin{align}
	 (\frac{\xi_u^{m+1}-\xi_u^m}{h_t},v)=&\chi B {a}_{\varphi^m}(g(u^{m+1}),v)-\chi B {a}_{\varphi^{m}_h}(g(u^{m+1}_h),v)\nonumber\\
	& -\chi  {a}_{\varphi^m}(c^{m+1},v) +\chi {a}_{\varphi^m_h}(c^{m+1}_h,v)+I_1(v),
    \label{err1}\\
	 \beta (\frac{\xi_c^{m+1}-\xi_c^m}{h_t},\theta )=&{a}_{1}(e_c^{m+1},\theta)-\alpha( \xi_c^{m+1},\theta)+(\xi_u^{m},\theta)+I_2(\theta),
    \label{err2}
    \end{align}
	\end{subequations}
	where
	\begin{subequations}\label{I1-I2}
	    \begin{align}
	        I_1(v)=&
		\chi B {a}_{\varphi^{m+1}-\varphi^{m}}(g(u^{m+1}),v)-\chi B {a}_{\varphi^{m+1}-\varphi^{m}}(c^{m+1},v)
		\nonumber\\
		+&(\frac{u^{m+1}-u^m}{h_t}-\partial_t u^{m+1},v )- (\frac{\eta_u^{m+1}-\eta_u^m}{h_t},v),\\
		I_2(\theta)=&\beta 	(\frac{c^{m+1}-c^m}{h_t}-\partial_t c^{m+1},\theta )\nonumber\\
		-&\beta (\frac{\eta_c^{m+1}-\eta_c^m}{h_t},\theta)
		- \alpha( \eta_c^{m+1},\theta)+(\eta_u^{m},\theta).   
	    \end{align}
	\end{subequations}

	Using the Cauchy-Schwarz inequality, we have for $w = u$ or $c$,
	\begin{equation}
    \begin{aligned}
		&|(\frac{w^{m+1}-w^m}{h_t}-\partial_t w^{m+1},v )-(\frac{\eta_w^{m+1}-\eta_w^m}{h_t},v)| \lesssim (h_t+ h^{k+1})\|v\|_0,\\
		&| {a}_{\varphi^{m+1}-\varphi^{m}}(w^{m+1},v)| \lesssim h_t \|v\|_E,\\
    \end{aligned}
	\end{equation}
	which yields
	\begin{eqnarray}\label{est_I}
		|I_1(v)|\lesssim (h_t+ h^{k+1})\|v\|_E, \quad |I_2(\theta)|\lesssim (h_t+ h^{k+1})\|\theta\|_E,\quad \forall v, \theta \in  V_h^0.
	\end{eqnarray}

\begin{lemma}\label{um1cm1}
		Assume the smoothness estimate \eqref{smooth_u} and the a-priori bound \eqref{priori} hold.
Let $\epsilon>0$ be a constant independent of $h$ and $h_t$ and suppose
\begin{equation}\label{lemma_res}
\frac{h_t}{h}\bigl(h_t+h^{k+1}+\|\xi_u^m\|_0+\|\xi_c^m\|_0\bigr)\le \epsilon,
\end{equation}
with $\epsilon\le \epsilon_0$ sufficiently small so that
\begin{equation*}
M_2\ge \frac{\beta}{2},\quad
M_3\ge \frac{h_t\chi\gamma_1}{2},
\end{equation*}
where $M_2,M_3$ are the coefficients defined in \eqref{coeff_M}.
Then the numerical solutions satisfy the uniform separation
\begin{gather}\label{uhm1}
\frac{3\delta_0}{4}\le \|u_h^{m+1}\|_{0,\infty}\le 1-\frac{3\delta_0}{4},\\
\frac{3\delta_0}{4}\le \|c_h^{m+1}\|_{0,\infty}\le M_0-\frac{3\delta_0}{4}.
\end{gather}
\end{lemma}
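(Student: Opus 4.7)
The plan is to control the energy-norm errors $\|\xi_u^{m+1}\|_E$ and $\|\xi_c^{m+1}\|_E$ at the current step, convert them to $L^\infty$ bounds via the logarithmic inverse inequality stated just before the lemma, and finally combine with the interpolation-error bound $\|I_h u^{m+1} - u^{m+1}\|_{0,\infty} \le \delta_0/8$ (which holds for $h$ sufficiently small under \eqref{smooth_u}) and the exact-solution separation $\delta_0 \le u^{m+1} \le 1-\delta_0$ to obtain the claimed two-sided bounds with margin $3\delta_0/4$; the argument for $c_h^{m+1}$ is parallel.

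First I would test \eqref{err1} with $v=\xi_u^{m+1}$ and \eqref{err2} with $\theta=\xi_c^{m+1}$ and add the two identities. The consistency remainders contribute at most $(h_t+h^{k+1})(\|\xi_u^{m+1}\|_E+\|\xi_c^{m+1}\|_E)$ by \eqref{est_I}. The chief nonlinear work is on the diffusion/chemotaxis bilinear form, which I would decompose as
$$a_{\varphi^m}(g(u^{m+1}),v)-a_{\varphi^m_h}(g(u_h^{m+1}),v)=a_{\varphi^m-\varphi^m_h}(g(u^{m+1}),v)+a_{\varphi^m_h}(g(u^{m+1})-g(u_h^{m+1}),v),$$
and analogously for the $a_{\varphi^m}(c^{m+1},v)-a_{\varphi^m_h}(c_h^{m+1},v)$ piece. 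Because \eqref{est_umh} provides $\delta_0/4 \le u_h^m \le 1-\delta_0/4$ at the previous step, the derivatives $g'(u)=1/[u(1-u)]$ and $\varphi'(u)=1-2u$ are uniformly bounded on the range of $u_h^m$, so both $g$ and $\varphi$ are Lipschitz with constants depending only on $\delta_0$. Each piece of the splitting is then controlled by products of energy norms of $\xi_u^m,\xi_u^{m+1},\xi_c^{m+1}$ and the interpolation remainders $\eta$.

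Next I would assemble the inequalities and apply Young's inequality with weights tuned to the constants $M_2,M_3$ referenced in the hypothesis. The coercivity of $a_{\varphi^m}$ from \eqref{aphi1} together with the mass term $\frac{\beta}{h_t}\|\xi_c^{m+1}\|_0^2$ furnishes the positive quadratic forms $h_t \chi \gamma_1 \|\xi_u^{m+1}\|_E^2$ and $\beta\|\xi_c^{m+1}\|_E^2$ that absorb the cross terms precisely when $M_2\ge\beta/2$ and $M_3\ge h_t\chi\gamma_1/2$. A one-step discrete Grönwall argument then delivers
$$\|\xi_u^{m+1}\|_E+\|\xi_c^{m+1}\|_E\lesssim h_t+h^{k+1}+\|\xi_u^m\|_0+\|\xi_c^m\|_0.$$
Applying the logarithmic $L^\infty$ inverse inequality converts this into
$\|\xi_u^{m+1}\|_{0,\infty}+\|\xi_c^{m+1}\|_{0,\infty}\lesssim \frac{|\ln h|^{1/2}}{h}(h_t+h^{k+1}+\|\xi_u^m\|_0+\|\xi_c^m\|_0)$,
actually bounded by $\delta_0/8$ under the smallness hypothesis \eqref{lemma_res} once $\epsilon_0$ is chosen small enough. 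The triangle inequality $u_h^{m+1}\ge u^{m+1}-\|\eta_u^{m+1}\|_{0,\infty}-\|\xi_u^{m+1}\|_{0,\infty}\ge \delta_0-\delta_0/8-\delta_0/8=3\delta_0/4$ (and the symmetric upper bound) closes the argument.

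The main obstacle is the bootstrap built into the nonlinear splittings: the Lipschitz constants of $g$ and $\varphi$ blow up as $u$ approaches $0$ or $1$, so the decomposition above is legitimate only if $u_h^m$ sits uniformly away from these singular endpoints, which is exactly what \eqref{est_umh} supplies at the previous step and what the present lemma must propagate to step $m+1$. The delicate point is to verify that the hidden constants produced by Young's inequality and the inverse estimates can all be controlled by constants independent of $m$, so that the smallness condition \eqref{lemma_res} suffices uniformly; this is what forces the particular choice of the weights $M_2,M_3$ and the mild CFL-type restriction linking $h_t$ to $h$ in the hypothesis.
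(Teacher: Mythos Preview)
Your proposal follows essentially the same route as the paper: test \eqref{err1} with $\xi_u^{m+1}$ and \eqref{err2} with $\xi_c^{m+1}$, split the nonlinear bilinear forms exactly as you write, use the Lipschitz bounds on $g,\varphi$ supplied by the previous-step separation \eqref{est_umh}, apply Young's inequality to manufacture the coefficients $M_2,M_3$ of \eqref{coeff_M}, and close with an inverse inequality plus the triangle inequality against the exact-solution bounds. The one place where your bookkeeping drifts from the paper is the inverse-inequality step: the estimate quoted just before the lemma is $\|v_h\|_{0,\infty}\lesssim|\ln h|^{1/2}\|v_h\|_E$ with \emph{no} $h^{-1}$, so your factor $|\ln h|^{1/2}/h$ is not what that inequality gives; the paper instead tracks the $L^2$ norms $\|\xi_u^{m+1}\|_0,\|\xi_c^{m+1}\|_0$ (which inherit an extra $h_t$ from the discrete time derivative in \eqref{lem_u}--\eqref{lem_c}) and then applies the standard $\|v_h\|_{0,\infty}\lesssim h^{-1}\|v_h\|_0$, producing exactly the $h_t/h$ prefactor that appears in the hypothesis \eqref{lemma_res} and yielding $\|\xi_u^{m+1}\|_{0,\infty}+\|\xi_c^{m+1}\|_{0,\infty}\le\delta_0/4$ in one stroke.
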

		
	\begin{proof}
	    Choosing $\theta=\xi_c^{m+1}$ in \eqref{err2} yields
        \begin{equation*}
            \begin{aligned}
              & \beta(\xi_c^{m+1},\xi_c^{m+1})-h_t {a}_{1}(e_c^{m+1},\xi_c^{m+1})\\
               =&
               \beta(\xi_c^{m},\xi_c^{m+1})-h_t\alpha( \xi_c^{m+1},\xi_c^{m+1})+h_t(\xi_u^{m},\xi_c^{m+1})+h_tI_2(\xi_c^{m+1}),
            \end{aligned}
        \end{equation*}
        where $I_2$ defined in \eqref{I1-I2}.
        Then using Young's inequality, we have
        \begin{equation}\label{lem_c}
            \begin{aligned}
             & \beta \|\xi_c^{m+1}\|_0^2+
              h_t {\bf C}\|\xi_c^{m+1}\|_E^2\\
              \leq & \beta \frac{2}{\epsilon} \|\xi_c^{m+1}\|_0^2+ 
              \beta \frac{\epsilon}{8} \|\xi_c^{m}\|_0^2
             - h_t \alpha \|\xi_c^{m+1}\|_0^2
             +h_t\frac{2}{\epsilon} \|\xi_u^{m}\|_0^2+ 
             h_t \frac{\epsilon}{8} \|\xi_c^{m+1}\|_0^2+h_t I_2(\xi_c^{m+1}),
            \end{aligned}
        \end{equation}
   where $\bf C$ is a constant independent with $h$ and $h_t$.    
        On the other hand,  taking $v=\xi_u^{m+1}$ in \eqref{err1} to obtain
        \begin{align}\label{lem_u}
        &	\|\xi_u^{m+1}\|_0^2 
        	-h_t\chi B {a}_{\varphi^m}(g(u^{m+1}),\xi_u^{m+1})+h_t\chi B {a}_{\varphi^{m}_h}(g(u^{m+1}_h),\xi_u^{m+1})\nonumber\\
        	& +h_t\chi  {a}_{\varphi^m}(c^{m+1},v) -h_t\chi
        	  {a}_{\varphi^m_h}(c^{m+1}_h,\xi_u^{m+1}) \nonumber \\
        	  =& 
        	  \|\xi_u^{m+1}\|_0^2 
        	  -h_t \chi B {a}_{\varphi^m_h}(g(e_u^{m+1}),\xi_u^{m+1})-h_t \chi B {a}_{e_\varphi^m}(g(u^{m+1}),\xi_u^{m+1})\nonumber\\
        	&  + h_t \chi  {a}_{\varphi^m_h}(e_c^{m+1},\xi_u^{m+1})+h_t \chi  {a}_{e_\varphi^m}(c^{m+1},\xi_u^{m+1})\nonumber\\
        	=&(\xi_u^{m},\xi_u^{m+1})+h_tI_1(\xi_u^{m+1})
        \end{align}

        Using \eqref{aphi1}, Young's inequality,  and the bounds established in Lemma  \ref{um1cm1}, we obtain 
        \begin{equation}\label{lem1}
        	 {a}_{\varphi^m_h}(g(e_u^{m+1}),\xi_u^{m+1})\leq \gamma_1 \|\xi_u^{m+1}\|_E^2,
        \end{equation}
       \begin{equation}\label{lem2}
       	{a}_{e_\varphi^m}(g(u^{m+1}),\xi_u^{m+1}) \leq \gamma_1 C_0 \|\xi_u^{m+1}\|_E^2,
       \end{equation}
        \begin{equation}\label{lem3}
       	{a}_{\varphi^m_h}(e_c^{m+1},\xi_u^{m+1})\leq  \gamma_1  \frac{2}{\epsilon}\|\xi_c^{m+1}\|_E^2  +\gamma_1  \frac{\epsilon}{8}\|\xi_u^{m+1}\|_E^2,
       \end{equation}
        \begin{equation}\label{lem4}
       	{a}_{e_\varphi^m}(c^{m+1},\xi_u^{m+1}) \leq   M_0 \gamma_1  \|\xi_u^{m+1}\|_E^2.
       \end{equation}
       \begin{equation}\label{lem5}
       	(\xi_u^{m},\xi_u^{m+1})\leq \frac{2}{\epsilon}\|\xi_u^{m}\|_0^2  +  \frac{\epsilon}{8}\|\xi_u^{m+1}\|_0^2.
       \end{equation}
      Then a substitution of \eqref{lem1}-\eqref{lem5} into \eqref{lem_u} and combing with \eqref{lem_c} reveals that
        \begin{equation}\label{coeff_M}
            \begin{aligned}
                 M_1\|\xi_u^{m+1}\|_0^2+ M_2 \|\xi_c^{m+1}\|_0^2 +M_3 \|\xi_u^{m+1}\|_E^2
                 +M_4 \|\xi_c^{m+1}\|_E^2
                 \leq  C_1\|\xi_u^{m}\|_0^2+ C_2 \|\xi_c^{m}\|_0^2,
            \end{aligned}
        \end{equation}
        where 
        \begin{equation}
        \begin{aligned}
        & M_1=1-\frac{\epsilon}{8}, \\
        & M_2=\beta-\beta \frac{2}{\epsilon}+h_t \alpha-\beta\frac{\epsilon}{8}, \\
        & M_3=-h_t \chi B\gamma_1-h_t \chi B\gamma_1 C_0+h_t \chi \gamma_1 \frac{\epsilon}{8}+h_t \chi M_0\gamma_1, \\
        & M_4=h_t {\bf C}+h_t \chi B \gamma_1\frac{2}{\epsilon}, \\
        & C_1=h_t \frac{2}{\epsilon}+\frac{2}{\epsilon}, \\ 
        & C_2=\beta \frac{\epsilon}{8}. 
        \end{aligned}
        \end{equation}
Provided $\epsilon\le \epsilon_0$ with $\epsilon_0$ small enough so that
\begin{equation*}
M_3\ge \frac{h_t\chi\gamma_1}{2},\quad
M_2\ge \frac{\beta}{2},
\end{equation*}
 with \eqref{est_I}, we obtain
\begin{equation*}
\|\xi_u^{m+1}\|_0+\|\xi_c^{m+1}\|_0
\le  {\mathcal{C}}h_t(h_t+h^{k+1}+\|\xi_u^m\|_0+\|\xi_c^m\|_0).
\end{equation*}

We choose $h_t=O(h)$ and $h\le h_0(\delta_0,\mathcal C)$, so that
\begin{equation*}
\mathcal C\frac{h_t}{h}(h_t+h^{k+1})\le \frac{\delta_0}{4}.
\end{equation*}
Using the inverse inequality  and the priori bound  \eqref{priori}  yields
\begin{equation*}
\|\xi_u^{m+1}\|_{0,\infty}+\|\xi_c^{m+1}\|_{0,\infty}
\le \mathcal{C}\frac{h_t}{h}(h_t+h^{k+1}+\|\xi_u^m\|_0+\|\xi_c^m\|_0)
\le \frac{\delta_0}{4}.
\end{equation*}
Consequently,
\begin{equation*}
\frac{3\delta_0}{4}\le \|u_h^{m+1}\|_{0,\infty}\le 1-\frac{3\delta_0}{4},\quad
\frac{3\delta_0}{4}\le \|c_h^{m+1}\|_{0,\infty}\le M_0-\frac{3\delta_0}{4}.
\end{equation*}
This completes the proof.
\end{proof}

        \begin{lemma}
    Under the assumptions \eqref{smooth_u} and \eqref{priori}, there exist positive constants $c_0,c_1$ independent of $h$ and $h_t$ such that
\begin{gather}\label{lemma2}
c_0\|\xi_u^{m+1}\|_E^2
\le a_{\varphi^m}(g(e_u^{m+1}),\xi_u^{m+1})
\le c_1 \|\xi_u^{m+1}\|_E^2. 
\end{gather}
\end{lemma}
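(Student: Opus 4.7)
The plan is to linearize the logarithmic nonlinearity and then convert the problem into a coercivity/continuity estimate for a bilinear form with an equivalent positive weight. By the integral mean value theorem,
$$
g(u^{m+1})-g(u_h^{m+1})=G^{m+1}\,e_u^{m+1},\qquad G^{m+1}(\bm x):=\int_0^1 g'\bigl(s\,u^{m+1}+(1-s)u_h^{m+1}\bigr)\,\mathrm{d}s.
$$
Since $g'(u)=1/(u(1-u))$, and Lemma~\ref{um1cm1} together with~\eqref{smooth_u} forces both $u^{m+1}(\bm x)$ and $u_h^{m+1}(\bm x)$ into the strictly interior interval $[3\delta_0/4,\,1-3\delta_0/4]$, the weight $G^{m+1}$ enjoys uniform two-sided bounds $0<g_0\le G^{m+1}(\bm x)\le g_1$ depending only on $\delta_0$. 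Consequently $\tilde\varphi^{m}:=\varphi^m G^{m+1}$ is still a positive admissible weight with $\varphi_0 g_0\le\tilde\varphi^{m}\le\varphi_1 g_1$, so the DDG stability estimate~\eqref{aphi1} remains valid for the form $a_{\tilde\varphi^{m}}(\cdot,\cdot)$ with the same admissible pair $(\beta_0,\beta_1)$.

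Next I would decompose $e_u^{m+1}=\eta_u^{m+1}+\xi_u^{m+1}$ and split
$$
a_{\varphi^m}\bigl(g(e_u^{m+1}),\xi_u^{m+1}\bigr)=a_{\varphi^m}\bigl(G^{m+1}\xi_u^{m+1},\xi_u^{m+1}\bigr)+a_{\varphi^m}\bigl(G^{m+1}\eta_u^{m+1},\xi_u^{m+1}\bigr).
$$
For the first summand I would show that, up to lower-order commutator terms produced by moving $G^{m+1}$ through the gradient and through the DDG jump and flux operators in~\eqref{bilin}, it agrees with $a_{\tilde\varphi^m}(\xi_u^{m+1},\xi_u^{m+1})$; applying~\eqref{aphi1} to that form then yields the leading two-sided bound $\gamma_0\varphi_0 g_0\|\xi_u^{m+1}\|_E^2\le a_{\tilde\varphi^m}(\xi_u^{m+1},\xi_u^{m+1})\le \gamma_1\varphi_1 g_1\|\xi_u^{m+1}\|_E^2$. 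The commutator terms are of the form $(\varphi^m(\nabla G^{m+1})\xi_u^{m+1},\nabla\xi_u^{m+1})_\tau$ plus edge residuals proportional to $[G^{m+1}]$, each of which scales as $\|\xi_u^{m+1}\|_0\,\|\xi_u^{m+1}\|_E$ and is folded into the leading $E$-norm term by the inverse inequality $\|\xi_u^{m+1}\|_0\lesssim h\|\xi_u^{m+1}\|_E$. The cross summand with $\eta_u^{m+1}$ is controlled by continuity together with the interpolation estimate~\eqref{eq1} applied to the piecewise smooth product $G^{m+1}\eta_u^{m+1}$, yielding an $O(h^{k+1})\|\xi_u^{m+1}\|_E$ contribution that Young's inequality absorbs into the $\|\xi_u^{m+1}\|_E^2$ factor under $h_t=O(h)$.

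The main obstacle I anticipate is controlling $\|\nabla G^{m+1}\|_{L^\infty}$ and the inter-element jumps of $G^{m+1}$, since $G^{m+1}$ inherits the non-smoothness of $u_h^{m+1}$. I plan to handle this by combining the a-priori bound~\eqref{priori} with the inverse inequality $\|u_h^{m+1}\|_{1,\infty}\lesssim h^{-1}\|u_h^{m+1}\|_E$, which is already invoked in the proof of Lemma~\ref{um1cm1} and delivers a uniform piecewise $W^{1,\infty}$ bound on $u_h^{m+1}$; the chain rule then gives a uniform piecewise bound on $\nabla G^{m+1}$, while the edge jumps $[G^{m+1}]|_e$ reduce to jumps of $u_h^{m+1}$ multiplied by $\|g''\|_{L^\infty}$, which on the restricted range $[3\delta_0/4,1-3\delta_0/4]$ is also bounded by a constant depending only on $\delta_0$. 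These jump contributions enter the proof only through lower-order edge integrals, which are bounded by trace and inverse inequalities and absorbed as above. Once this technical step is executed, the constants $c_0$ and $c_1$ emerge independent of $h$ and $h_t$, with explicit dependence on $\delta_0,\varphi_0,\varphi_1,\gamma_0,\gamma_1$ and the DDG parameters $(\beta_0,\beta_1)$.
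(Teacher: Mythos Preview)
Your overall strategy---linearizing $g(u^{m+1})-g(u_h^{m+1})$ via the mean-value theorem and reducing to a weighted DDG form $a_{\tilde\varphi^m}(\xi_u^{m+1},\xi_u^{m+1})$---is exactly what the paper does, though the paper uses a pointwise Lagrange remainder rather than your integral form and is far terser about the commutator terms you identify.

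There are, however, two concrete problems in your handling of those commutators. First, the inequality you invoke as an ``inverse inequality,'' namely $\|\xi_u^{m+1}\|_0\lesssim h\|\xi_u^{m+1}\|_E$, is false: inverse inequalities run the other way ($\|\xi_u^{m+1}\|_E\lesssim h^{-1}\|\xi_u^{m+1}\|_0$), and even a discrete Poincar\'e inequality (valid since $\xi_u^{m+1}$ has zero mean) gives only $\|\xi_u^{m+1}\|_0\lesssim \|\xi_u^{m+1}\|_E$ with no factor of $h$. Consequently the volume commutator $(\varphi^m(\nabla G^{m+1})\xi_u^{m+1},\nabla\xi_u^{m+1})_\tau$ is of size $O(1)\|\xi_u^{m+1}\|_E^2$, not $O(h)\|\xi_u^{m+1}\|_E^2$, and cannot be absorbed into the coercive term merely by taking $h$ small; your lower bound $c_0>0$ is therefore not established. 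Second, the uniform piecewise $W^{1,\infty}$ bound on $u_h^{m+1}$ that you need to control $\nabla G^{m+1}$ is not available from the stated hypotheses: \eqref{priori} is the induction assumption at step $m$, and Lemma~\ref{um1cm1} delivers only $L^\infty$ separation for $u_h^{m+1}$, not a gradient bound. Bootstrapping $\|u_h^{m+1}\|_{1,\infty}\lesssim 1$ would require the very $E$-norm estimate this lemma is meant to feed into, so there is a circularity.

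The paper's own argument simply asserts the coercivity $a_{\varphi^m}(\xi_u^{m+1}/u_{\xi_i},\xi_u^{m+1})\gtrsim\|\xi_u^{m+1}\|_E^2$ without treating the commutator and silently drops the $\eta_u^{m+1}$ contribution; your instinct that something more is needed is correct, but the resolution you propose does not close the gap.
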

            \begin{proof}
                Using the property of $a_{\varphi}(\cdot, \cdot)$ in \eqref{aphi1} ,  we have
                \begin{equation*}
                    \begin{aligned}
                          a_{\varphi^m}(g(e_u^{m+1}),\xi_u^{m+1})\lesssim \|g(e_u^{m+1})\|_E \|\xi_u^{m+1}\|_E.
                    \end{aligned}
                \end{equation*}
                Noticing that $g(u)=\ln u-\ln(1-u)$,  by using the Taylor expansion , there exists a $u_{\xi_1}$ between $u^{m+1}$ and $u^{m+1}_h$ such that
                \begin{eqnarray*}
                    \ln u^{m+1}-\ln u^{m+1}_h= \frac{1}{u_{\xi_1}} e_u^{m+1} =\frac{1}{u_{\xi_1}} (\xi_u^{m+1}+\eta_u^{m+1}).
                \end{eqnarray*}
                For function $\ln(1-u)$ , there exists a $u_{\xi_2}$ to ensure the above equality holds. 
                Then we have
                \begin{align*}
                      a_{\varphi^m}(g(e_u^{m+1}),\xi_u^{m+1})&= a_{\varphi^m}(\frac{\xi_u^{m+1}+\eta_{u}^{m+1}}{u_{\xi_1}},\xi_u^{m+1})-a_{\varphi^m}(\frac{\xi_u^{m+1}+\eta_{u}^{m+1}}{u_{\xi_2}},\xi_u^{m+1}) \nonumber \\
                      &= a_{\varphi^m}(\frac{\xi_u^{m+1}}{u_{\xi_1}},\xi_u^{m+1})-a_{\varphi^m}(\frac{\xi_u^{m+1}}{u_{\xi_2}},\xi_u^{m+1}). 
                \end{align*}
                In light of  \eqref{uhm1} and \eqref{smooth_u}, we have
                \begin{eqnarray*}
                    \frac{3\delta_0}{4}\leq u_{\xi_i}\leq 1-\frac {3\delta_0}{4}.
                \end{eqnarray*}
                Together with \eqref{est_umh} yields
                \begin{eqnarray*}
                    a_{\varphi^m}(\frac{\xi_u^{m+1}}{u_{\xi_i}},\xi_u^{m+1})\geq \frac{3\delta_0 \gamma_0}{4(1-\frac{3\delta_0}{4})}\|\xi_u^{m+1}\|^2_E.
                \end{eqnarray*}
                Consequently,

                \begin{eqnarray*}
                    a_{\varphi^m}(g(e_u^{m+1}),\xi_u^{m+1}))\geq \frac{3\delta_0 \gamma_0}{4(1-\frac{3\delta_0}{4})}\|\xi_u^{m+1}\|^2_E. 
                \end{eqnarray*}

                On the other hand, by the continuity of $a_{\varphi}(\cdot,\cdot)$ and the separation bound, we have
\begin{equation*}
	a_{\varphi^m}(g(e_u^{m+1}),\xi_u^{m+1})
	\le \gamma_1\|g(e_u^{m+1})\|_E\|\xi_u^{m+1}\|_E
	\le \frac{\gamma_1}{\delta_0}\|\xi_u^{m+1}\|_E^2,
\end{equation*}
which completes our proof. 
\end{proof}

	Now we are ready to present the error estimates for the fully discretization numerical solution.
	\begin{theorem}
		Let that  $u^{m+1}_h, c^{m+1}_h$  be the solution of \eqref{full}-\eqref{full1} with the coefficient $\beta_1=\frac{1}{2k(k+1)}$
		for the DDG discretization. Then 
		\begin{eqnarray}
	\label{result1}	&	\|u^{m+1}-u^{m+1}_h\|_0+\|I_h u^m-u^{m}_h\|_E\lesssim h_t+h^{k+1},\\
	\label{result2}	&\|c^{m+1}-c^{m+1}_h\|_0+\|I_h c^m-c^{m}_h\|_E \lesssim h_t+h^{k+1}.
		\end{eqnarray}
	\end{theorem}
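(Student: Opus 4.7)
My plan is to proceed by induction on $m$, simultaneously establishing the a-priori bound \eqref{priori} and the claimed error estimates. The base case $m=0$ is immediate if we choose $u_h^0=I_h u^0$, $c_h^0=I_h c^0$, so that $\xi_u^0=\xi_c^0=0$. For the inductive step, assume \eqref{priori} holds at step $m$; then Lemma \ref{um1cm1} guarantees the uniform separation of $u_h^m$ from $\{0,1\}$ and of $c_h^m$ from $0$, which is precisely what is needed to invoke the coercivity-type bound \eqref{lemma2} on the nonlinear term.

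The core energy argument tests the error equation \eqref{err1} with $v=\xi_u^{m+1}$ and \eqref{err2} with $\theta=\xi_c^{m+1}$. For \eqref{err1} the nonlinear diffusion residual decomposes as
\begin{equation*}
a_{\varphi^m}(g(u^{m+1}),\cdot)-a_{\varphi^m_h}(g(u^{m+1}_h),\cdot)=a_{\varphi^m_h}(g(e_u^{m+1}),\cdot)+a_{e_\varphi^m}(g(u^{m+1}),\cdot),
\end{equation*}
with a parallel split for the chemotactic transport $a_{\varphi^m}(c^{m+1},\cdot)-a_{\varphi^m_h}(c_h^{m+1},\cdot)$. The first piece supplies the dominant coercive contribution $c_0\|\xi_u^{m+1}\|_E^2$ via \eqref{lemma2}, while the $a_{e_\varphi^m}(\cdot,\cdot)$ pieces are controlled using the local Lipschitz bound $\|e_\varphi^m\|_{0,\infty}\lesssim \|e_u^m\|_{0,\infty}\lesssim h_t+h^{k+1}+\|\xi_u^m\|_{0,\infty}$ together with the inverse inequality and the inductive hypothesis. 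Combined with the identity $(\xi_u^{m+1}-\xi_u^m,\xi_u^{m+1})=\tfrac12(\|\xi_u^{m+1}\|_0^2-\|\xi_u^m\|_0^2+\|\xi_u^{m+1}-\xi_u^m\|_0^2)$, the consistency bounds \eqref{est_I}, and Young's inequality, this yields
\begin{equation*}
\|\xi_u^{m+1}\|_0^2-\|\xi_u^m\|_0^2+h_t\|\xi_u^{m+1}\|_E^2
\lesssim \varepsilon h_t\|\xi_c^{m+1}\|_E^2+h_t(h_t+h^{k+1})^2+h_t(\|\xi_u^m\|_0^2+\|\xi_u^m\|_E^2).
\end{equation*}
An analogous treatment of \eqref{err2} produces the companion estimate
\begin{equation*}
\|\xi_c^{m+1}\|_0^2-\|\xi_c^m\|_0^2+h_t\|\xi_c^{m+1}\|_E^2 \lesssim h_t\|\xi_u^m\|_0^2+h_t(h_t+h^{k+1})^2.
\end{equation*}
Adding and choosing $\varepsilon$ small absorbs the cross term $\varepsilon h_t\|\xi_c^{m+1}\|_E^2$ into the LHS of the second inequality. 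Summing over $\ell=0,\ldots,m$ and applying the discrete Gronwall inequality gives $\|\xi_u^{m+1}\|_0^2+\|\xi_c^{m+1}\|_0^2+h_t\sum_{\ell\le m+1}(\|\xi_u^\ell\|_E^2+\|\xi_c^\ell\|_E^2)\lesssim (h_t+h^{k+1})^2$, which delivers \eqref{priori} at step $m+1$ and closes the induction. The stated bounds \eqref{result1}--\eqref{result2} then follow by the triangle inequality $e_v=\eta_v+\xi_v$ together with the standard interpolation estimates on $\eta_v$.

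The main obstacle will be keeping the chemotactic cross-coupling under control: the term $-\chi a_{\varphi^m}(c^{m+1},\xi_u^{m+1})$ in the $u$-equation inevitably generates a contribution proportional to $\|\xi_c^{m+1}\|_E\|\xi_u^{m+1}\|_E$, and its absorption hinges on the coercivity coming from the $c$-equation dominating after Young's splitting — exactly the structural calculation already encoded in the coefficient system \eqref{coeff_M}. A second subtle point is propagating the bootstrap: the a-priori bound \eqref{priori} at step $m$ must imply the uniform separation used in Lemma \ref{um1cm1}, and this in turn legitimizes the Taylor-expansion step behind \eqref{lemma2} at step $m+1$. As long as we enforce $h_t=O(h)$ so that the smallness requirement \eqref{lemma_res} is satisfied uniformly, the induction propagates cleanly to any fixed terminal time.
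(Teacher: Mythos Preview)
Your proposal is correct and follows essentially the same route as the paper: test the error equations \eqref{err1}--\eqref{err2} with $\xi_u^{m+1}$ and $\xi_c^{m+1}$, split the nonlinear bilinear forms as $a_{\varphi^m}(\cdot,\cdot)-a_{\varphi_h^m}(\cdot,\cdot)=a_{\varphi_h^m}(\text{error},\cdot)+a_{e_\varphi^m}(\text{exact},\cdot)$, extract coercivity from \eqref{lemma2}, control the remaining pieces via \eqref{est_I} and the a-priori bound \eqref{priori}, and close with a discrete Gronwall argument and the bootstrap induction. Your treatment is in fact slightly more explicit than the paper's about the induction structure and the role of the $h_t=O(h)$ constraint in propagating \eqref{priori}; just be careful that the stray term $h_t\|\xi_u^m\|_E^2$ on your right-hand side is indeed absorbed by the inductive hypothesis (as you indicate) rather than left to the Gronwall step, since otherwise the constants do not obviously close.
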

	
	\begin{proof}
		First, by choosing $\theta=\xi_c^{m+1}$ in \eqref{err2} yields,
		\begin{equation*}
			\frac{\beta}{2 h_t}(\|\xi_c^{m+1}\|_0^2-\|\xi_c^{m}\|_0^2+\|\xi_c^{m+1}-\xi_c^{m+1}\|_0^2)={a}_{1}(e_c^{m+1},\xi_c^{m+1})-\alpha( \xi_c^{m+1},\xi_c^{m+1})+(\xi_u^{m},\xi_c^{m+1})+I_2(\xi_c^{m+1}).
		\end{equation*}
Then using \eqref{est_I}, the Cauchy-Schwarz inequality and the a-priori assumption \eqref{priori}, we have 
		\begin{equation*}
		\begin{aligned}
			&	\frac{\beta}{ h_t}(\|\xi_c^{m+1}\|_0^2-\|\xi_c^{m}\|_0^2+\|\xi_c^{m+1}-\xi_c^{m}\|_0^2)\\
			\lesssim& \|\xi_c^{m+1}\|_E^2+ \alpha \|\xi_c^{m+1}\|_0^2+\|\xi_u^{m}\|_0 \|\xi_c^{m+1}\|_0+(h_t+ h^{k+1})\|\xi_c^{m+1}\|_E\\
			\lesssim &\|\xi_c^{m+1}\|_E^2+ \alpha \|\xi_c^{m+1}\|_0^2+\|\xi_u^{m}\|_0^2 +\|\xi_c^{m+1}\|_0^2+(h_t+ h^{k+1})^2,
		\end{aligned}
	\end{equation*}
	and thus,
		\begin{equation}\label{est}
			\|\xi_c^{m+1}\|_0^2-\|\xi_c^{m}\|_0^2+\|\xi_c^{m+1}\|_E \lesssim h_t\|\xi_u^{m}\|_0^2+h_t(h_t+h^{k+1})^2.
		\end{equation}
		
			By choosing $v=\xi_u^{m+1}$ in \eqref{err1},	using the Cauchy-Schwarz inequality and the a-priori assumption \eqref{priori}, we  can bound the term ${a}_{e_\varphi^m}(g(u^{m+1}),\xi_u^{m+1})$ as follows:
		\begin{equation*}
			{a}_{e_\varphi^m}(g(u^{m+1}),\xi_u^{m+1}) \lesssim \|u^{m+1}\|_{1,\infty} \|e_\varphi^m\|_0 \|\xi_u^{m+1}\|_E\lesssim (h_t+h^{k+1})\|\xi_u^{m+1}\|_E,
		\end{equation*}
		similarly,
		\begin{equation*}
			{a}_{e_\varphi^m}(c^{m+1},\xi_u^{m+1}) \lesssim \|c^{m+1}\|_{1,\infty} \|e_\varphi^m\|_0 \| \xi_u^{m+1}\|_E\lesssim (h_t+h^{k+1})\|\xi_u^{m+1}\|_E.
		\end{equation*}
		
	 Then using \eqref{lemma2} and \eqref{est_I} yields
		\begin{equation*}
			\begin{aligned}
			&	\frac{1}{2 h_t}(\|\xi_u^{m+1}\|_0^2-\|\xi_u^{m}\|_0^2+\|\xi_u^{m+1}-\xi_u^{m}\|_0^2)\\
				\lesssim & \|\xi_u^{m+1}\|_E^2+\|\xi_u^{m+1}\|_E \|\xi_c^{m+1}\|_E+(h_t+ h^{k+1})\|\xi_u^{m+1}\|_E\\
				\lesssim& \|\xi_u^{m+1}\|_E^2+\|\xi_c^{m+1}\|_E^2+ \|\xi_u^{m+1}\|_E^2+(h_t+ h^{k+1})\|\xi_u^{m+1}\|_E.
			\end{aligned}
		\end{equation*}
		 Substituting \eqref{est} into above inequality, we have
		 \begin{equation*}
		 	\|\xi_u^{m+1}\|_0^2+\|\xi_u^{m+1}\|_E ^2         +\|\xi_c^{m+1}\|_0^2+\|\xi_c^{m+1}\|_E^2 \lesssim h_t(\|\xi_u^{m}\|_0^2+\|\xi_c^{m}\|_0^2)+h_t(h_t+h^{k+1})^2.
		 \end{equation*}
	Under the CFL condition $C h_t \le 1/2$, discrete Gronwall's inequality yields
\begin{equation*}
\|\xi_u^{m+1}\|_0 + \|\xi_u^{m+1}\|_E \lesssim h_t + h^{k+1},
\end{equation*}
which together with the approximation properties $\|u^{m+1} - I_h u^{m+1}\|_0 \lesssim h^{k+1}$ and $\|I_h u^{m+1} - u_h^{m+1}\|_E \lesssim h^{k+1}$ gives the desired estimate \eqref{result1}.

Similarly, for \eqref{result2}, we apply elliptic-projection error $\|I_h c^{m+1} - c^{m+1}\|_E \lesssim h^{k+1}$ to obtain
\begin{equation*}
\|\xi_c^{m+1}\|_0 + \|I_h c^{m+1} - c_h^{m+1}\|_E \lesssim h_t + h^{k+1}.
\end{equation*}
The proof is complete.
\end{proof}

    \begin{remark}
        Our scheme is conditionally energy stable. 
        In fact, it is easy to modify it to the unconditional energy-stable scheme 
        by applying the convex splitting method
        only if we replace $u^{m}$ with $u^{m+1}$ in \eqref{full1}, 
        which will result in a coupled discrete scheme. 
        Or we can also add stabilized factors $\tau_{u}(u^{m+1}-u^{m})$ and $\tau_{c}(c^{m+1}-c^{m})$ into the discrete chemical potential $\mu_{u}^{m+1}$ and $\mu_{c}^{m+1}$, respectively, to maintain the unconditional energy stability, where $\tau_{u}$ and $\tau_{c}$ are two constants and independent of time and space steps. 
        However, it is necessary to rewrite the original energy to a modified version with the stabilization included. 
        Both these two schemes are feasible with no extra difficulties in the proof of positivity and error estimate. 
        We leave the details to the interested readers. 
    \end{remark}

\section{Numerical results}\label{sec:results}

In this section, we present a few examples to demonstrate the accuracy  of DDG methods. Furthermore, positivity-preserving  and mass conservation of  the proposed schemes are numerically validated. In the tests, uniform meshes are used, which are constructed by equally dividing  the computational domain into $N$ cells ($N\times N$  rectangles for 2D problem).  To diminish the time discretization error, 
		  step size is $dt=0.01 h_{min}^2$. 
\begin{example}\label{eg1}
Convergence rate test for one-dimensional problems.

We consider a modified KS system obtained by adding suitable source terms so that an exact solution is available.
On the one-dimensional domain  $\Omega=[0,2\pi]$ with periodic boundary conditions, the modified equations read 
		\begin{equation*}
		\begin{aligned}
			u_t&=\nabla(\chi \varphi(u) \nabla (Bg(u)-c))+f_u,\\
			\beta c_t&=\Delta c-\alpha c+u+f_c,\\
		\end{aligned}
	\end{equation*}
with periodic boundary condition for $u$ and $c$,	where $f_u$ and $f_c$ are source functions.
	The initial solutions are
	\begin{equation*}
		u(x,0)=0.3\sin(x)+0.5,\quad c(x,0)=\sin(x)+2.
	\end{equation*}
	This system admits the exact solution
	\begin{equation*}
	u(x,t)=\exp(-t)(0.3\sin(x)+0.5),\quad c(x,t)=\exp(-t)(\sin(x)+2).
    \end{equation*}
    \end{example}
    Parameters are selected as $\chi=0.1$, $\alpha=0.2$, $B=0.2$, $\beta=0.01$. 
Table \ref{table1} and Table \ref{table2} presents the errors and convergence orders at $ t = 0.01 $ for the numerical flux parameters $(\beta_0, \beta_1) = \left(\frac{7}{6}, 0\right)$ using $ \mathbb{P}_1 $ and $\mathbb{P}_2$ polynomials, respectively. We observe $(k+1)$-order accuracy for both $ u_h^m $ and $ c_h^m $.
\begin{table}[ht]
\begin{center}
\begin{tabular}{c|cccc}
\hline
$N$ & $\mathrm{err}_u$ & $\mathrm{rate}_u$ & $\mathrm{err}_c$ & $\mathrm{rate}_c$ \\
\hline
8&	1.97E-02&	-- &	4.47E-02&	-- \\
		16&	4.91E-03&	2.00& 	1.06E-02&	2.08 \\
		32&	1.18E-03&	2.05 &	2.61E-03&	2.02 \\
		64&	2.59E-04&	2.19 &	6.51E-04&	2.01 \\
		128	&6.15E-05&	2.07 &	1.62E-04&	2.00 \\
\hline
\end{tabular}
\end{center}
\caption{Accuracy test with $\mathbb{P}_1(x)$ quadratic polynomials for Example \ref{eg1}.}
\label{table1}
\end{table}
\begin{table}[ht]
\begin{center}
\begin{tabular}{c|cccc}
\hline
$N$ & $\mathrm{err}_u$ & $\mathrm{rate}_u$ & $\mathrm{err}_c$ & $\mathrm{rate}_c$ \\
\hline
4 & 9.72\text{E-}03 & -- & 3.16\text{E-}02 &--\\
8 & 1.25\text{E-}03 & 2.96 & 3.91\text{E-}03 & 3.01 \\
16 & 1.57\text{E-}04 & 2.99 & 4.59\text{E-}04 & 3.09 \\
32 & 1.96\text{E-}05 & 3.00 & 5.52\text{E-}05 & 3.05 \\
64 & 2.48\text{E-}06 & 2.98 & 6.88\text{E-}06 & 3.01\\
\hline
\end{tabular}
\end{center}
\caption{Accuracy test with $\mathbb{P}_2(x)$ quadratic polynomials for Example \ref{eg1}.}
\label{table2}
\end{table}

	Next, we investigate the effectiveness of numerical methods in preserving physical properties.
    We compute the numerical solution by the proposed DDG method on a fine mesh with $N_x=64$ and using the second-order ($k = 2$) piecewise polynomials.  The calculation time $t = 0.5$.
     The left of    Figure \ref{fig1d} vividly illustrates the temporal evolution of discrete energy, clearly confirming the energy dissipation property of the system. 
The right one displays the rough estimates of mass conservation throughout the simulation. The fact that the mass remains nearly constant over time numerically verifies the property of mass conservation.
Furthermore, Figure \ref{fig1d1} offers a detailed analysis of the maximum and minimum values of $ u $ and the minimum value of $ c $ throughout the simulation. This information underscores the robustness of the proposed method, which effectively maintains essential physical constraints, such as non-negativity and boundedness, across the entire computation process.

	\begin{figure}[!ht]
		\centering
		\subfigure{
			\begin{minipage}{0.48\textwidth}
				\includegraphics[width=1.0\textwidth]{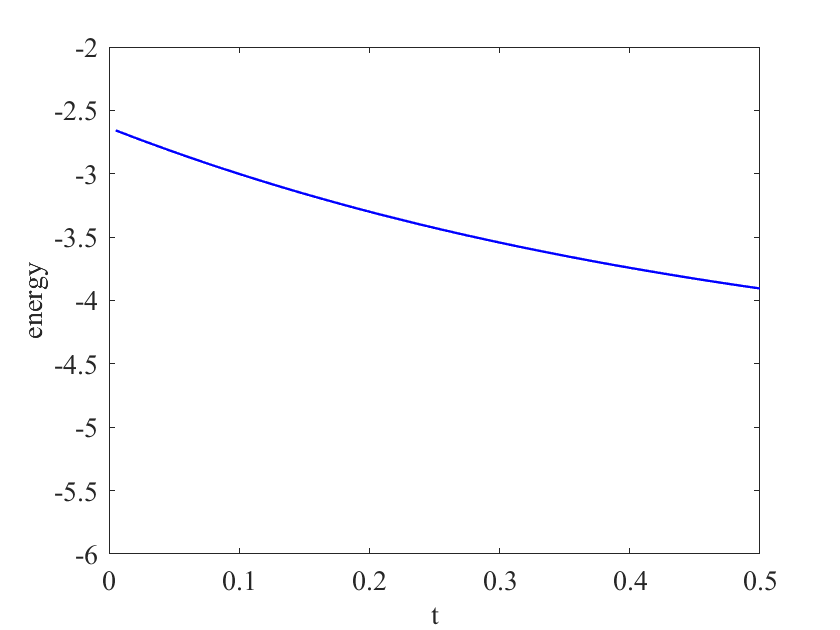}
			\end{minipage}
		}	
		\subfigure{
			\begin{minipage}{0.48\textwidth}
				\includegraphics[width=1.0\textwidth]{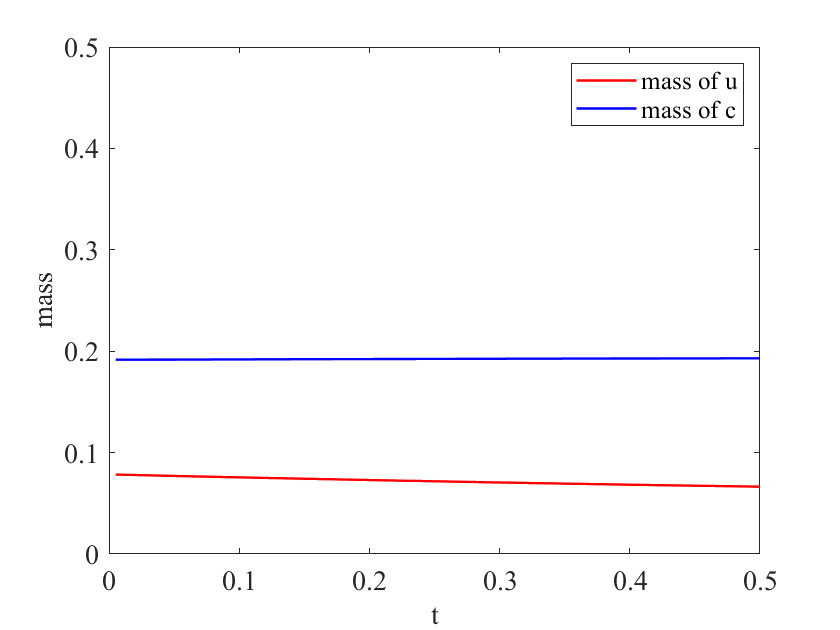}
			\end{minipage}
		}

		\caption{ Time evolution of free energy (left) and mass of $u$ and $c$ (right) for Example  \ref{eg1} .}
		\label{fig1d}
	\end{figure}

		\begin{figure}[!ht]
		\centering
		
		\subfigure{
			\begin{minipage}{0.48\textwidth}
				\includegraphics[width=1.0\textwidth]{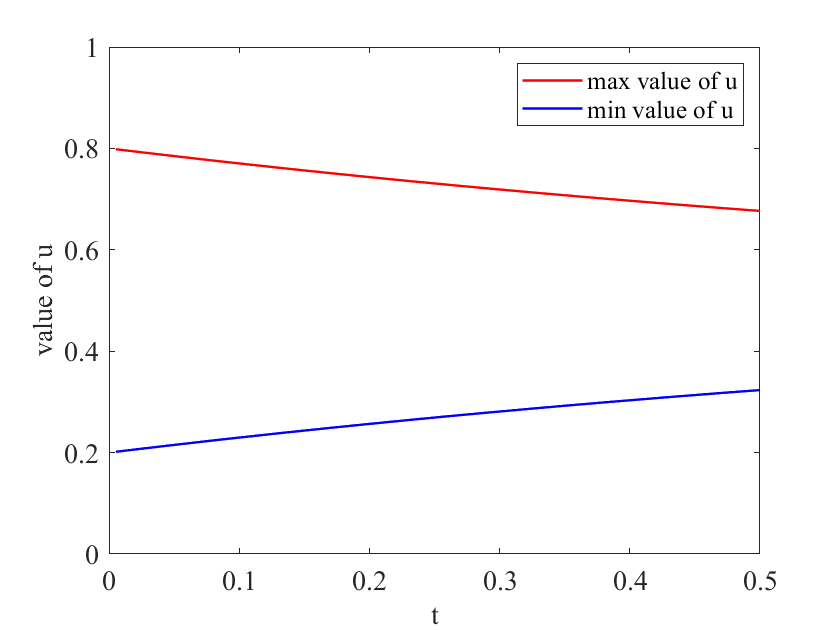}
			\end{minipage}
		}	
		\subfigure{
			\begin{minipage}{0.48\textwidth}
				\includegraphics[width=1.0\textwidth]{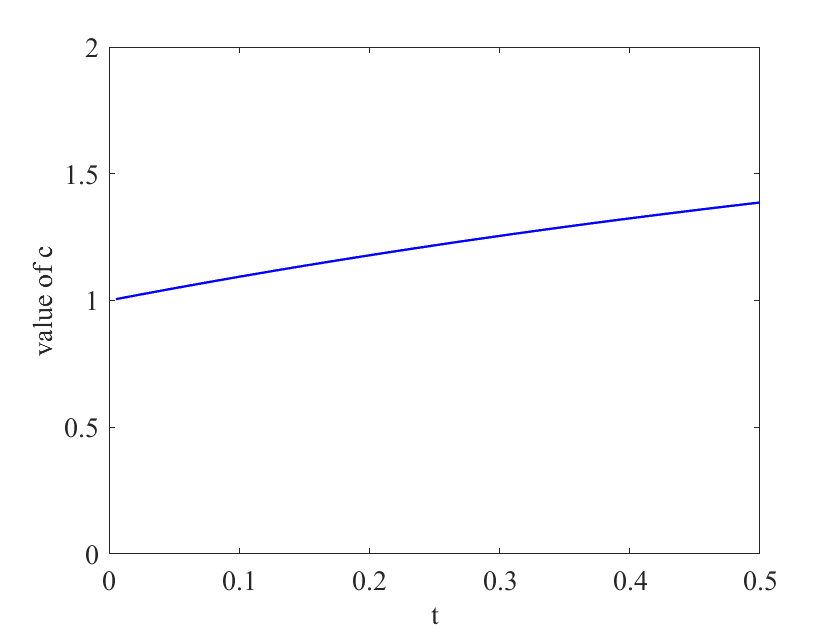}
			\end{minipage}
		}

		\caption{ Time evolution of maximum and minimum values of $u$ (left) and minimum value of $c$  (right) Example \ref{eg1}.}
		\label{fig1d1}
	\end{figure}

\begin{example}\label{eg2}
Convergence rate test for two-dimensional problems.       

In this example, we consider the KS system in a rectangle domain $\Omega=[0, 2\pi]^ 2$ with source terms $f_u(x,y,t)$ and $f_c(x,y,t)$ added.
The initial solutions are chosen as
\begin{equation*}
u(x,y,0)=0.3\sin(x)\cos(y)+0.5,\qquad
c(x,y,0)=\sin(x)\cos(y)+2.
\end{equation*}
The corresponding exact solutions are
\begin{equation*}
u(x,y,t)=\exp(-t)(0.3\sin(x)\cos(y)+0.5),\qquad
c(x,y,t)=\exp(-t)(\sin(x)\cos(y)+2),
\end{equation*}
with periodic boundary conditions enhanced. 
\end{example}

\begin{table}[!ht]
\begin{center}
\begin{tabular}{c|cccc}
\hline
$N\times N$ & $\mathrm{err}_u$ & $\mathrm{rate}_u$ & $\mathrm{err}_c$ & $\mathrm{rate}_c$ \\
\hline
$10\times 10$	&1.95E-02&	-- &	6.50E-02&	--\\
$20\times 20$	&4.90E-03&	1.99 &	1.63E-02&	1.99 \\
$30\times30$	&2.20E-03&	1.99& 	7.30E-03	&2.00 \\
$40\times40$&	1.20E-03	&1.96 	&4.10E-03	&2.00 \\
$50\times50$	&7.80E-04&	1.93 &	2.60E-03&	2.00\\ 
\hline
\end{tabular}
\end{center}
\caption{Accuracy test with $\mathbb{P}_1(x)\times \mathbb{P}_1(y) $ quadratic polynomials for Example \ref{eg2}.}
\label{err_2d}
\end{table}

\begin{table}[!ht]
\begin{center}
\begin{tabular}{c|cccc}
\hline
$N$ & $\mathrm{err}_u$ & $\mathrm{rate}_u$ & $\mathrm{err}_c$ & $\mathrm{rate}_c$ \\
\hline
 $4\times 4$& 1.58E-02 & -- & 5.27E-02 & -- \\
$8 \times8$& 2.02E-03 & 2.97 & 6.73E-03 & 2.97 \\
$16\times 16$& 2.54E-04 & 2.99 & 8.46E-04 & 2.99 \\
$32\times 32$& 3.22E-05 & 2.98 & 1.06E-04 & 3.00 \\
\hline
\end{tabular}
\end{center}
\caption{Accuracy test with $\mathbb{P}_2(x)\times \mathbb{P}_2(y) $ quadratic polynomials for Example \ref{eg2}.}
\label{err_2d_p2}
\end{table}

The discrete errors $L^2$ norm and their respective orders of $u$ and $c$ at $t = 0.01$, are reported in Table \ref{err_2d} and \ref{err_2d_p2}. 
From these tables, one can see that the solution numerically converges to the exact solution with the (optimal) second order which confirms the theoretical results predicted by our convergence analysis. 
The temporal evolution of discrete energy, mass and maximum and minimum value of $u$ and $c$ are presented in Figure \ref{fig2d} and \ref{fig1d2}. 
The figures numerically confirm that the proposed numerical method can preserve the physical properties, including energy decay, mass conservation, and the boundness of $u$ and $c$. 

\begin{figure}[!ht]
\centering
\subfigure{
\begin{minipage}{0.48\textwidth}
\includegraphics[width=1.0\textwidth]{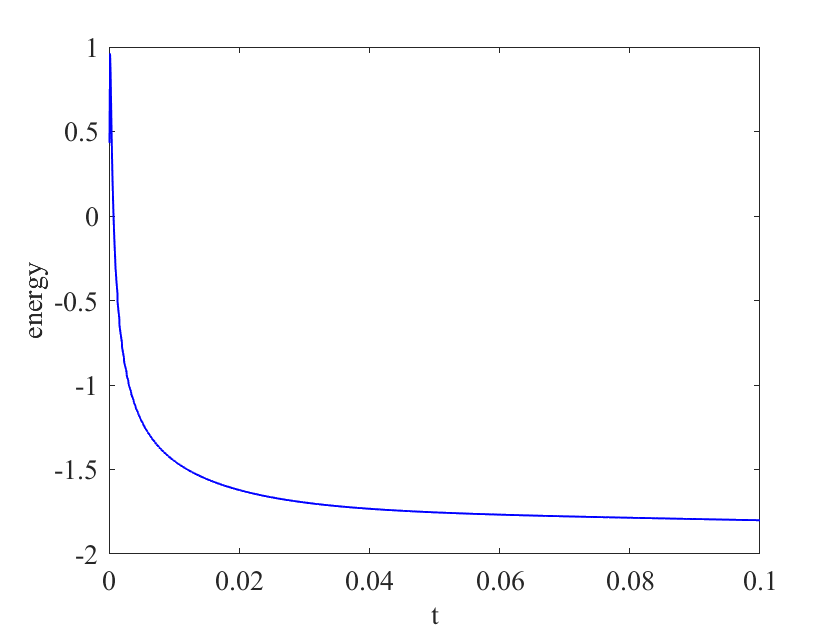}
\end{minipage}
}	
\subfigure{
\begin{minipage}{0.48\textwidth}
\includegraphics[width=1.0\textwidth]{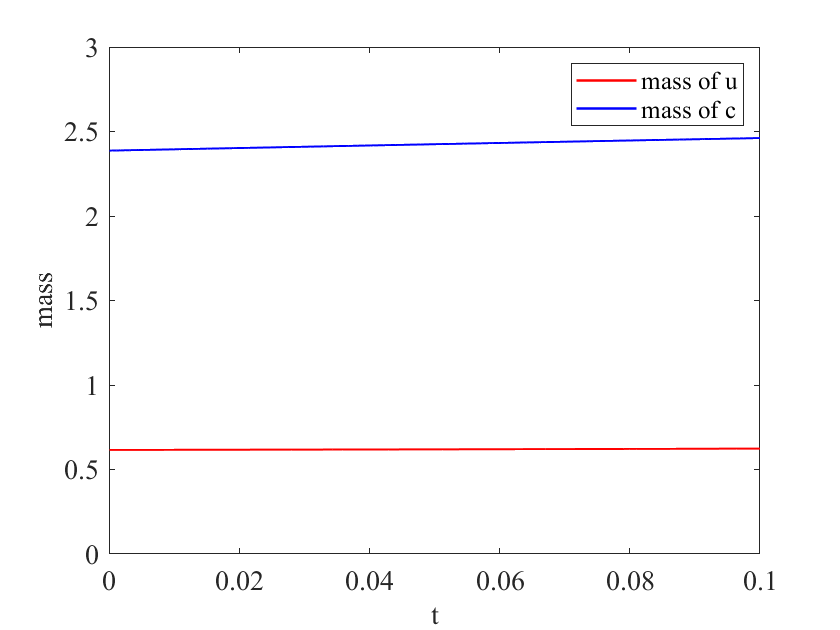}
\end{minipage}
}
\caption{Time evolution of free energy (left) and mass of $u$ and $c$ (right) for Example \ref{eg2}.}
\label{fig2d}
\end{figure}

\begin{figure}[!ht]
\centering
\subfigure{
\begin{minipage}{0.48\textwidth}
\includegraphics[width=1.0\textwidth]{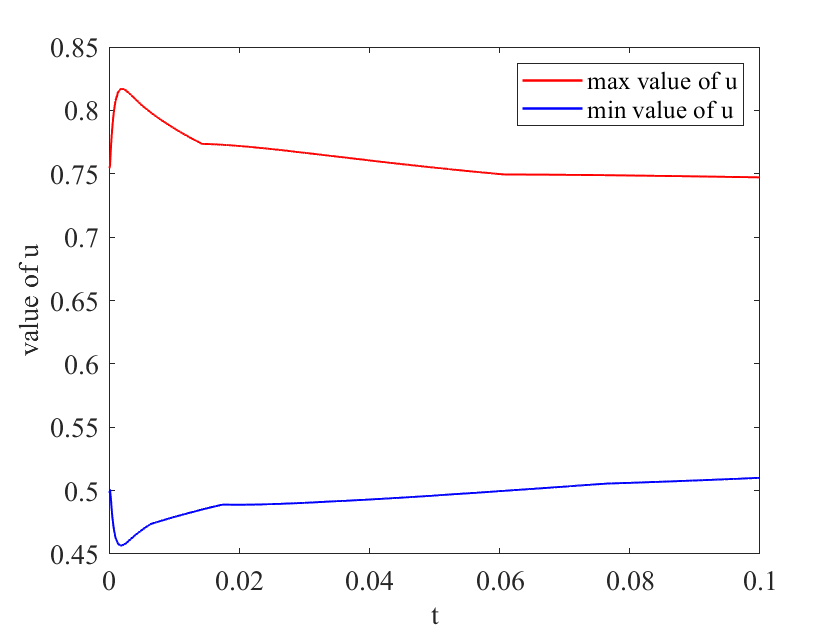}
\end{minipage}
}
\subfigure{
\begin{minipage}{0.48\textwidth}
\includegraphics[width=1.0\textwidth]{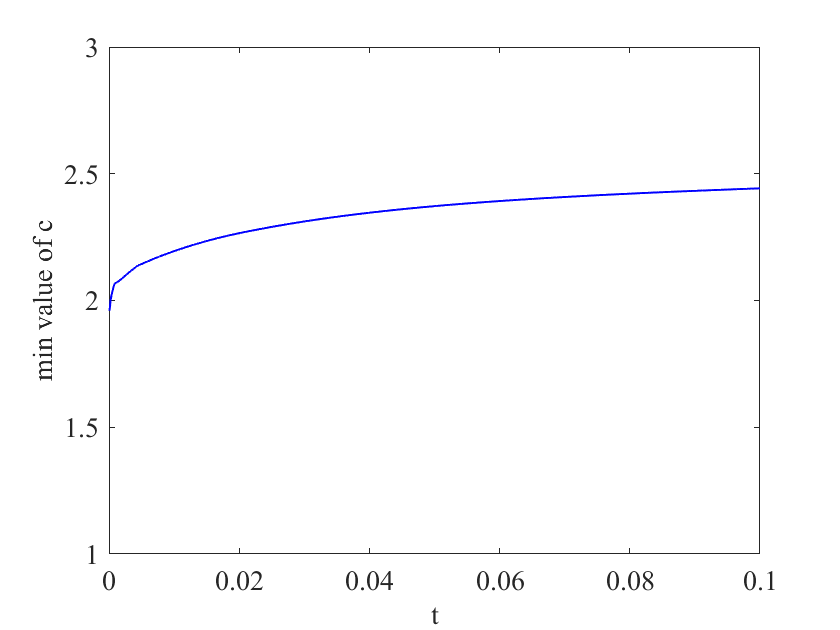}
\end{minipage}
}
\caption{Time evolution of maximum and minimum values of $u$ (left) and minimum value of $c$  (right) Example \ref{eg2}.}
\label{fig1d2}
\end{figure}

\begin{example}\label{eg3}The equilibrium state of two-dimensional problems. 

We test our scheme for the 2D KS system with parameters $\chi=0.1,\alpha=0.02, B=0.5,\beta=1$ on the domain $[0,1]^2$.
The initial solution are
\begin{equation*}
u(x,y,0)=0.8\exp(-\sqrt{(x-0.5)^2+(y-0.5)^2}/0.05),\end{equation*}
\begin{equation*}
c(x,y,0)=0.1(1+0.1\sin(2\pi x)\sin(2\pi y)).
\end{equation*}
\end{example}
\begin{figure}[!ht]
	\centering
	\subfigure[$u_h(x,y,0)$]{
		\begin{minipage}{0.33\textwidth}
			\includegraphics[width=1.0\textwidth]{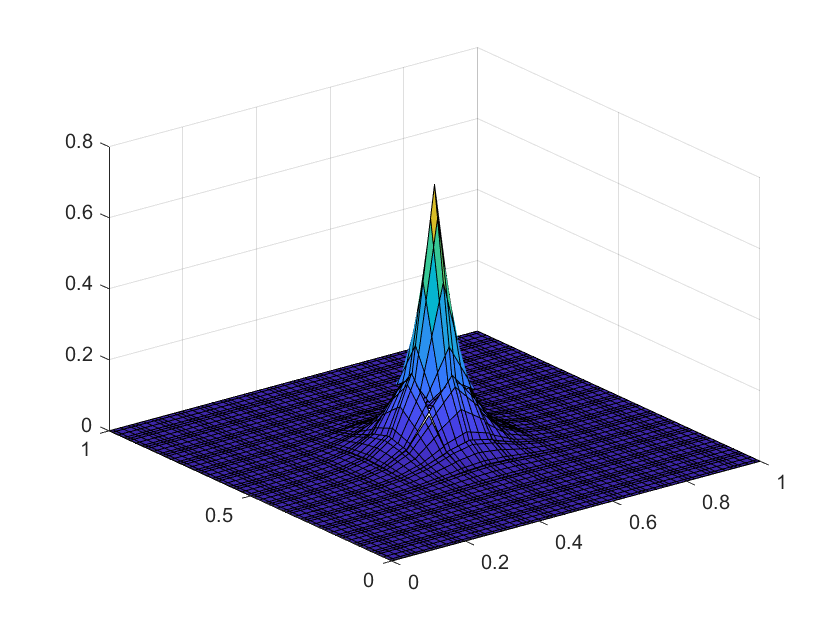}
		\end{minipage}
	}
    \hskip -0.3cm
		\subfigure[$u_h(x,y,0.01)$]{
		\begin{minipage}{0.33\textwidth}
			\includegraphics[width=1.0\textwidth]{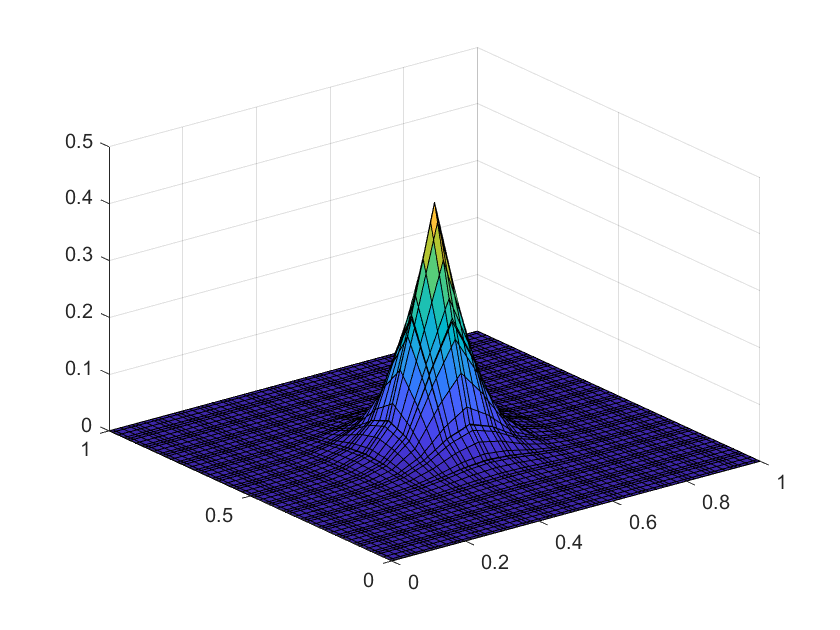}
		\end{minipage}
	}
    \hskip -0.3cm
	\subfigure[$u_h(x,y,0.03)$]{
		\begin{minipage}{0.33\textwidth}
			\includegraphics[width=1.0\textwidth]{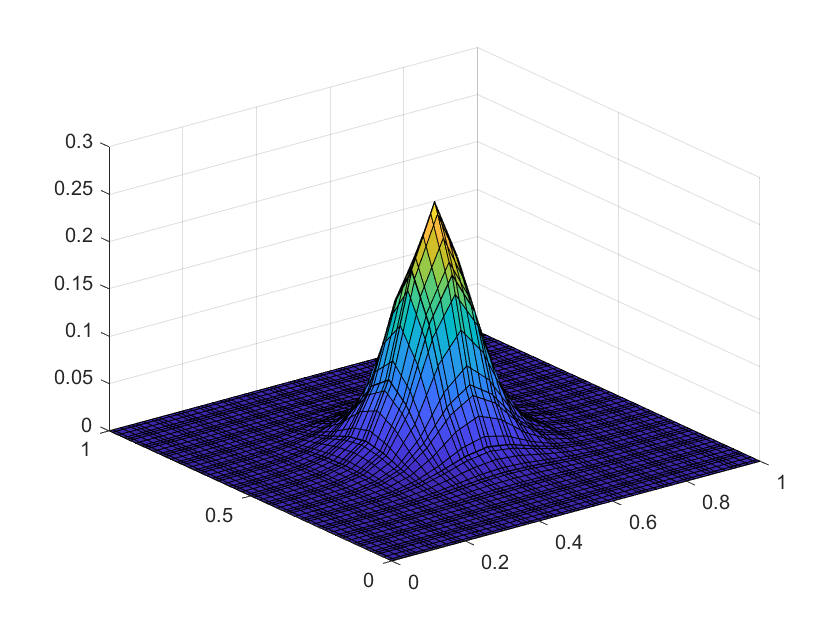}
		\end{minipage}
	}
    \hskip -0.3cm
	\subfigure[$u_h(x,y,0.05)$]{
		\begin{minipage}{0.33\textwidth}
			\includegraphics[width=1.0\textwidth]{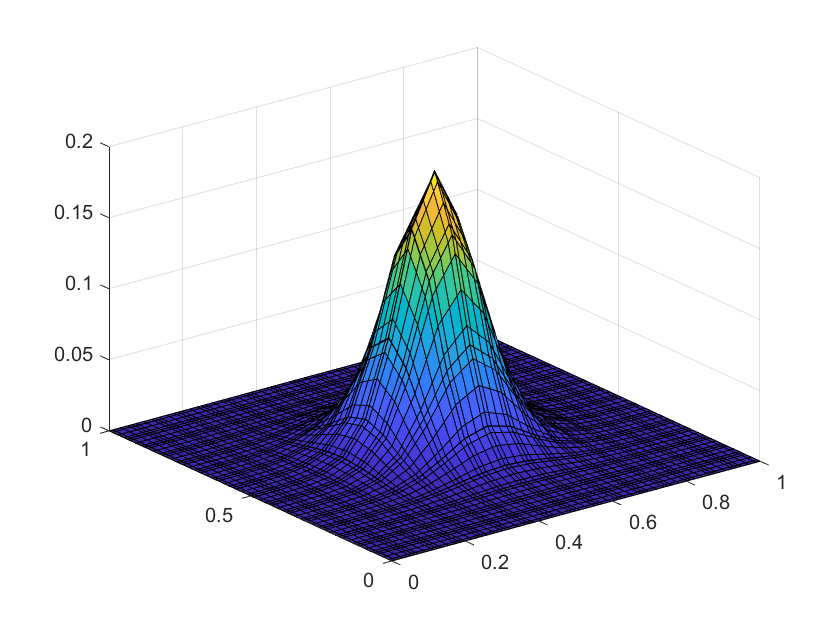}
		\end{minipage}
	}
    \hskip -0.3cm
	\subfigure[$u_h(x,y,0.1)$]{
		\begin{minipage}{0.33\textwidth}
			\includegraphics[width=1.0\textwidth]{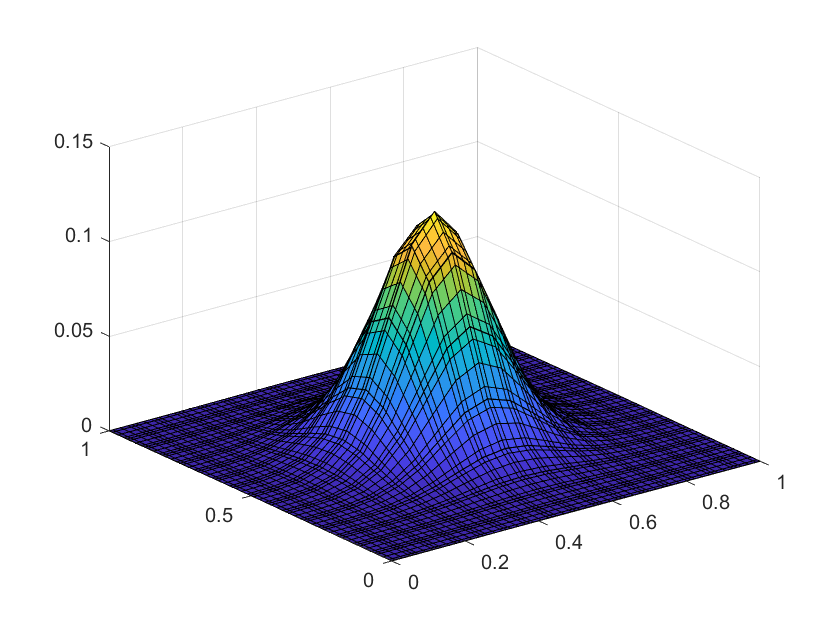}
		\end{minipage}
	}
    \hskip -0.3cm
	\subfigure[$u_h(x,y,0.5)$]{
		\begin{minipage}{0.33\textwidth}
			\includegraphics[width=1.0\textwidth]{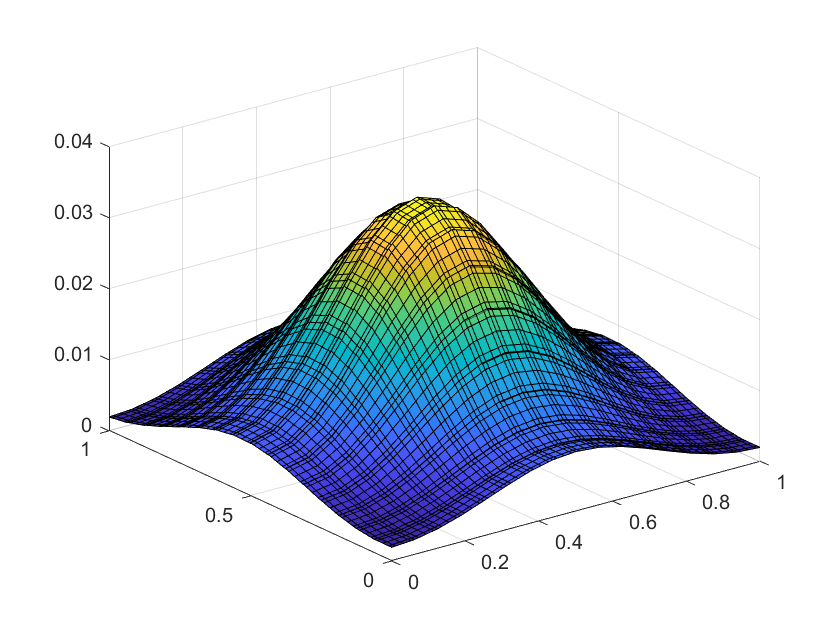}
		\end{minipage}
	}
    \hskip -0.3cm
	\subfigure[$c_h(x,y,0)$]{
		\begin{minipage}{0.33\textwidth}
			\includegraphics[width=1.0\textwidth]{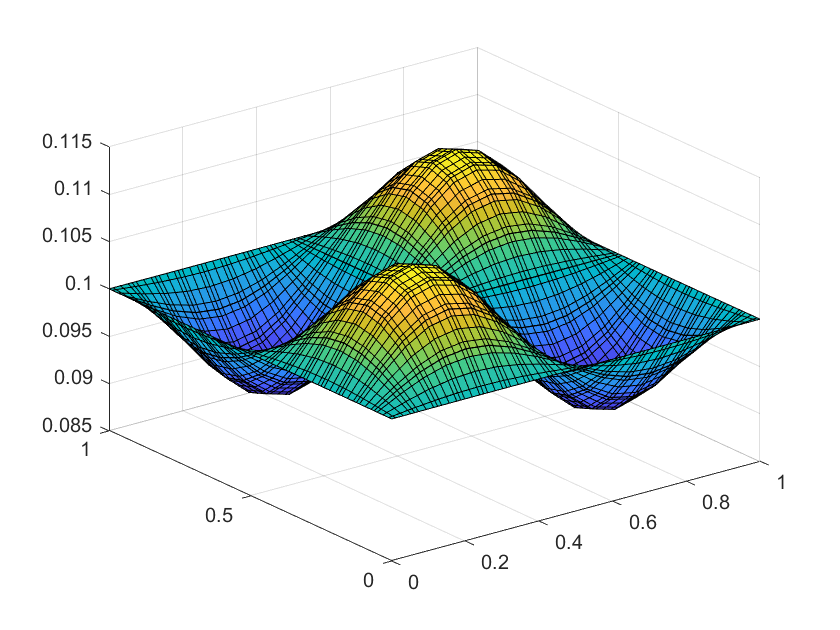}
		\end{minipage}
	}
    \hskip -0.3cm
		\subfigure[$c_h(x,y,0.01)$]{
			\begin{minipage}{0.33\textwidth}
				\includegraphics[width=1.0\textwidth]{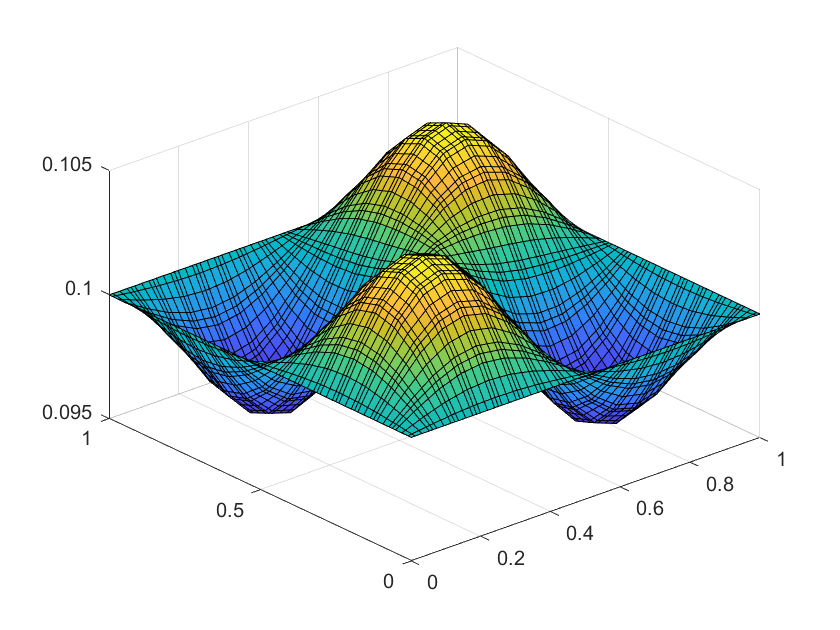}
			\end{minipage}
	}
    \hskip -0.3cm
	\subfigure[$c_h(x,y,0.03)$]{
		\begin{minipage}{0.33\textwidth}
			\includegraphics[width=1.0\textwidth]{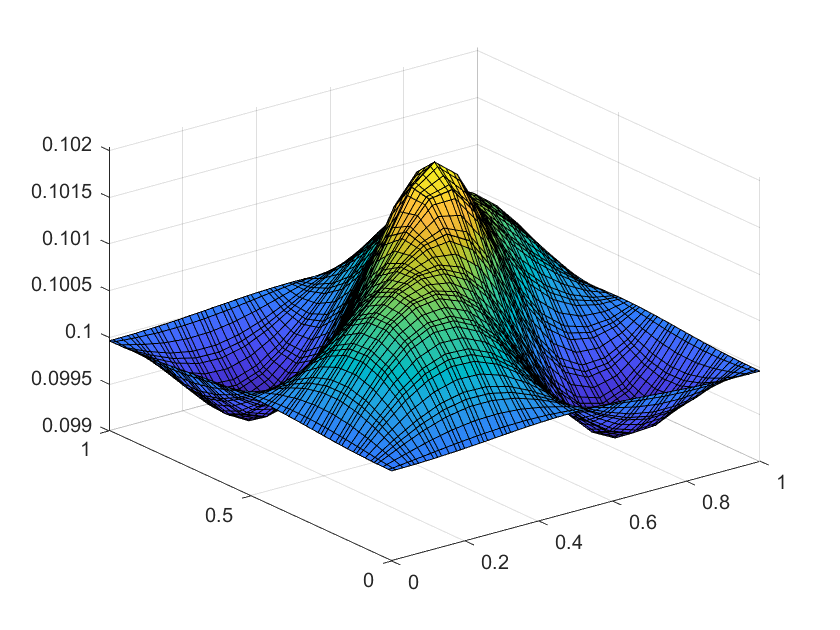}
		\end{minipage}
	}
    \hskip -0.3cm
	\subfigure[$c_h(x,y,0.05)$]{
		\begin{minipage}{0.33\textwidth}
			\includegraphics[width=1.0\textwidth]{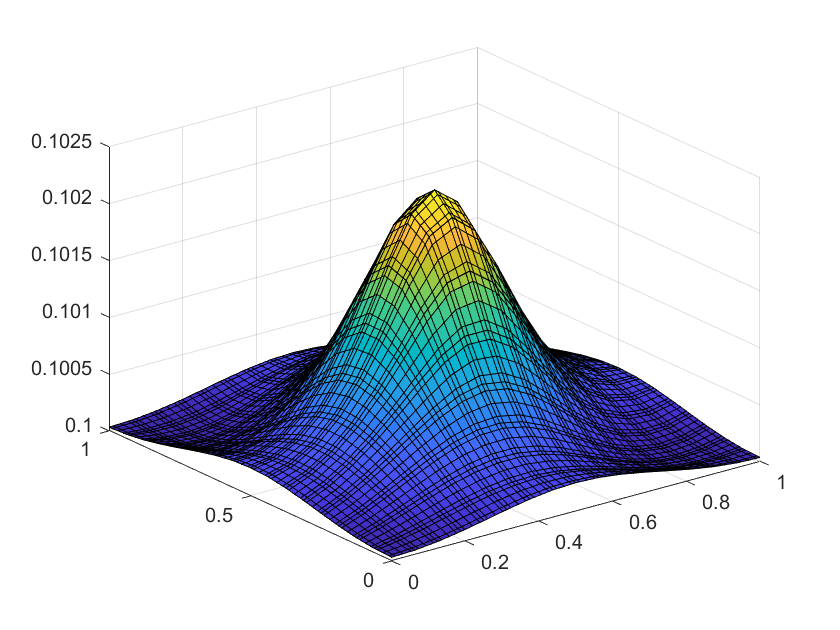}
		\end{minipage}
	}
    \hskip -0.3cm
	\subfigure[$c_h(x,y,0.1)$]{
		\begin{minipage}{0.33\textwidth}
			\includegraphics[width=1.0\textwidth]{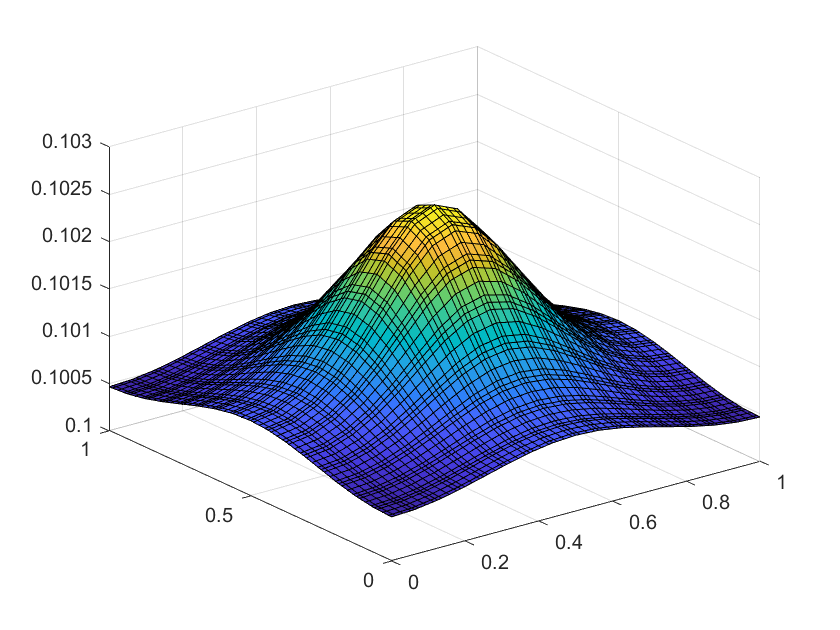}
		\end{minipage}
	}
    \hskip -0.3cm
	\subfigure[$c_h(x,y,0.5)$]{
		\begin{minipage}{0.33\textwidth}
			\includegraphics[width=1.0\textwidth]{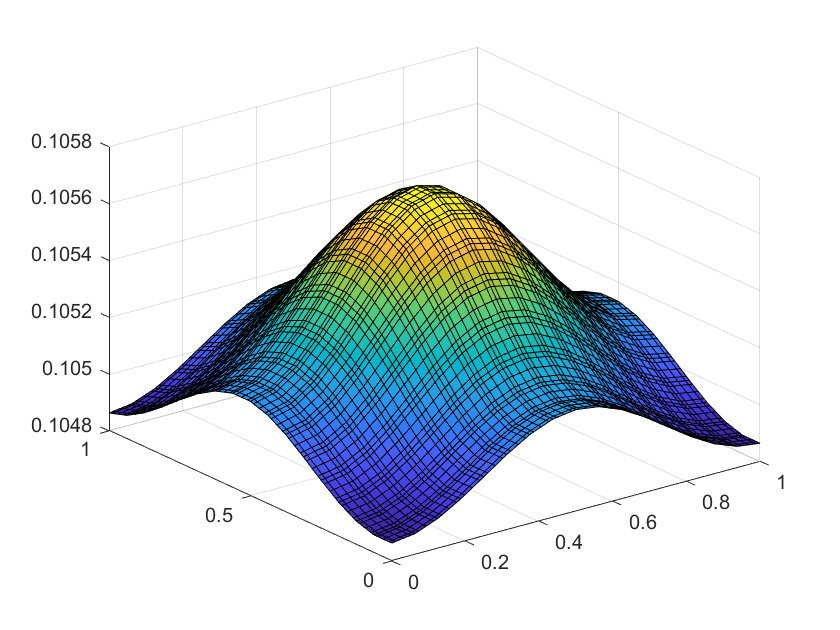}
		\end{minipage}
	} 
	\caption{The numerical solutions $u_h$ and $c_h$ at different time of Example \ref{eg3}.}
	\label{fig}
\end{figure}
For this example, the initial conditions are chosen as local Gaussian $u(x,y,0)$ and sinusoidal disturbance $c(x,y,0)$. 
The total mass of the cells is calculated to be 0.004$\pi$. This value is significantly small, especially in the context of the KS model, where the critical mass threshold for blow-up (e.g., 8$\pi$ in two dimensions) is much higher. Since 0.004$\pi$ is below far this critical threshold, it indicates that the system is unlikely to experience a blow-up. Instead, cells are expected to spread out or reach a stable equilibrium state over time due to diffusion and chemotaxis, rather than concentrating to a point where blow-up occurs.  
We set $k=1, N_x = 32$.
In Figure \ref{fig}, for a short time, the initial Gaussian distribution (peak in the center $(0.5,0.5)$) will gradually spread outward due to the diffusion term $D\nabla u$, forming a smooth decaying profile. The image of $c$ shows periodic ripples, the amplitude decreasing over time, but superimposing a smooth bump in the peak area of $u$. With the evolution of time, the image of $c$ gradually loses high-frequency oscillation and presents a unimodal distribution centered on $u$ (see Fig. 8.4(j)). For long periods $t=0.5$, the KS system tends to a spatially uniform steady state, the distribution of $u$ and $c$ tends to be flat, and the residual local structure completely disappears.
\begin{example}\label{eg4}
   Two-dimensional blow up phenomena.
    
    In this example, we test the ability of the proposed method to capture the blow-up solution of the KS model.  The parameters are selected as $\chi=D=\beta=\alpha=1$, $\varphi(u)=u$, and domain $\Omega=[-0.5,0.5]^2$.
We use the Neumann boundary conditions, $\nabla u\cdot {\bf n}=\nabla c\cdot {\bf n}=0, (x,y)\in \partial \Omega$.
    \end{example}
    
The initial solutions are
    \begin{eqnarray*}
        u(x,y,0)=840\exp(-84(x^2+y^2)),\quad c(x,y,0)=420\exp(-42(x^2+y^2)).
    \end{eqnarray*}
    According to \cite{li2017,qiu2021}, the numerical solution explodes in a finite time at the center of a rectangular region.
Figure \ref{fig1blwup} presents the numerical solutions of the cell density $u(x, y, t)$ for Example 6.4 obtained by solving the two-dimensional Keller-Segel chemotaxis model using the method proposed in this paper at $t=0$, $t = 10^{-5}$ and $t = 5 \times 10^{-5}$. 
 We
  set $k=2, N_x = 32$. From these figures, it can be observed that, as expected, the example exhibits a blow-up phenomenon in the center of the rectangular domain $[-0.5, 0.5] \times [-0.5, 0.5]$ in finite time. As time increases, the maximum peak value of the cell density $u(x, y, t)$ forms a very sharp peak structure in the central region. Specifically, Figures 6.7(b) and (c) show the cell density $u(x, y, t)$ just before the blow-up at $t = 10^{-5}$ and  the blow-up phenomena at $t = 5 \times 10^{-5}$, respectively.
	\begin{figure}[!ht]
	\centering
	\subfigure[$u_h(x,y,0)$]{
		\begin{minipage}{0.33\textwidth}
			\includegraphics[width=1.0\textwidth]{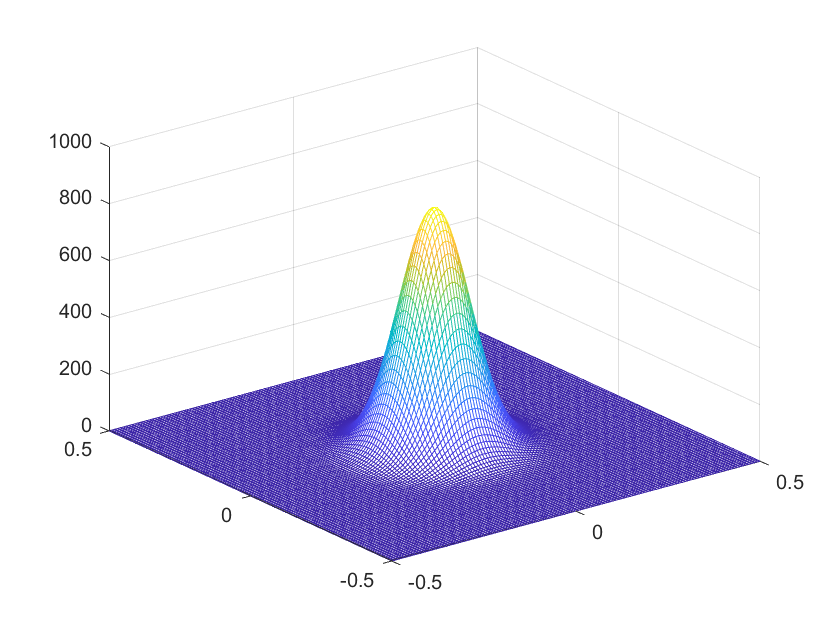}
		\end{minipage}
	}
    \hskip -0.3cm
	\subfigure[$u_h(x,y,10^{-5})$]{
		\begin{minipage}{0.33\textwidth}
			\includegraphics[width=1.0\textwidth]{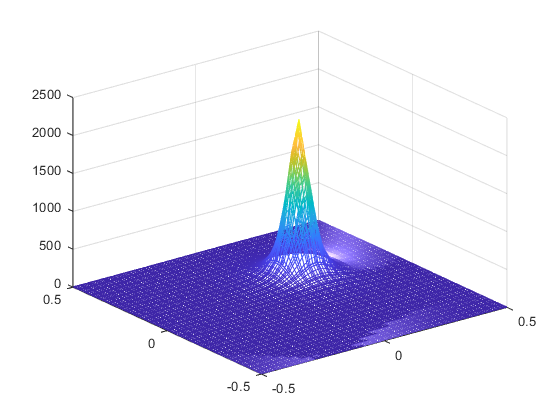}
		\end{minipage}
	}
    \hskip -0.3cm
	\subfigure[$u_h(x,y,5\times10^{-5})$]{
		\begin{minipage}{0.33\textwidth}
			\includegraphics[width=1.0\textwidth]{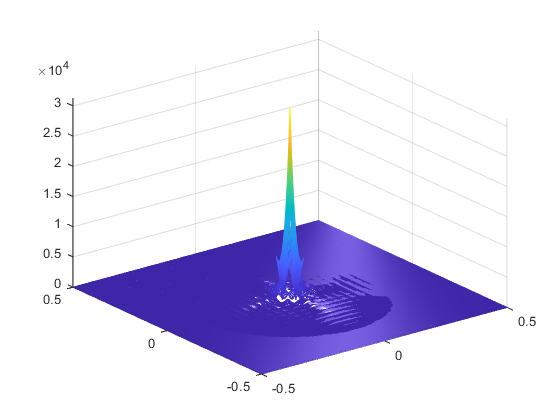}
		\end{minipage}
	}	
	
	\caption{ The blow up phenomenon of Example \ref{eg4}.}
	\label{fig1blwup}
\end{figure}

\section{Conclusions}\label{sec:conclusions}
In this work we devise and analyze a decoupled discrete scheme for KS system that combines an energy-stable semi-implicit method with a positivity-preserving DDG method. 
By means of a carefully designed energy functional, 
we prove that the cell density and the chemoattractant concentration remain strictly positive at every Gauss–Lobatto node, 
while the discrete energy is non-increasing. 
In addition, we establish optimal-order error estimates that quantify the approximation accuracy of the numerical solutions. 
Extensive experiments are presented to validate the theoretical results and to illustrate the scheme’s ability to capture both non-aggregation and saturation-limited aggregation phenomena.

The present work lays a solid theoretical and computational foundation for positivity-preserving, 
energy-stable discretizations of Keller–Segel-type systems. 
Furthermore, the current first-order scheme can be extended to second- or higher-order methods 
(e.g., BDF2, RK2, or exponential integrators) while maintaining energy stability and positivity.

\section*{Acknowledgement}
Y. Qin is partially supported by the National Natural Science Foundation of China (No. 12201369) 
and the Fundamental Research Program of Shanxi Province (No. 202303021211004). 
X. Yin is partially supported by the National Natural Science Foundation of China (No. 12416626).
\bibliographystyle{plain}
\normalem
\bibliography{refs}
\end{document}